\definecolor{backgrey}{rgb}{0.86,0.86,0.86}
\definecolor{dblue}{rgb}{0,0.0,0.5}
\definecolor{dred}{rgb}{0.4,0.2,0}
\definecolor{dgreen}{rgb}{0.0,0.5,0}
\newcommand{\captionfonts}{\small}
\long\def\@makecaption#1#2{%
  \vskip\abovecaptionskip
  \sbox\@tempboxa{{\captionfonts #1: #2}}%
  \ifdim \wd\@tempboxa >\hsize
    {\captionfonts #1: #2\par}
  \else
    \hbox to\hsize{\hfil\box\@tempboxa\hfil}%
  \fi
  \vskip\belowcaptionskip}
\newtheorem{theorem}{Theorem}
\newtheorem{assumption}[theorem]{Assumption}
\newtheorem{remark}[theorem]{Remark}
\newtheorem{definition}[theorem]{Definition}
\newtheorem{lemma}[theorem]{Lemma}
\newenvironment{proof}[1][Proof]{\textbf{#1.} }{\ \hspace*{\fill} \rule{0.5em}{0.5em}}
\title{Control of systems in Lure form over erasure channels}
\author{\quad Amit Diwadkar \quad Sambarta Dasgupta \quad Umesh Vaidya 
\thanks{A. Diwadkar is a post-doctoral researcher with the Department of Electrical and
Computer Engineering, Iowa State University, Ames, IA, 50011
diwadkar@iastate.edu}
\thanks{S. Dasgupta is a graduate student with the Department of Electrical and
Computer Engineering, Iowa State University, Ames, IA, 50011
dasgupta@iastate.edu}
\thanks{U. Vaidya is with the Department of Electrical and
Computer Engineering,
Iowa State University,
Ames, IA, 50011 ugvaidya@iastate.edu}%
}
\begin{document}
\maketitle \thispagestyle{empty} \pagestyle{empty}
\begin{abstract}                          
In this paper, we study the problem of control of discrete-time nonlinear systems in Lure form over erasure channels at the input and output. The input and output channel uncertainties are modeled as Bernoulli random variables. The main results of this paper provide sufficient condition for the mean square exponential stability of the closed loop system expressed in terms of statistics of channel uncertainty and plant characteristics. We also provide synthesis method for the design of observer-based controller that is robust to channel uncertainty. To prove the main results of this paper, we discover a stochastic variant of the well known Positive Real Lemma and principle of separation for stochastic nonlinear system. Application of the results for the stabilization of system in Lure form over packet-drop network is discussed. Finally a result for state feedback control of a Lure system with a general multiplicative uncertainty at actuation is discussed.
\end{abstract}

\begin{IEEEkeywords}
Uncertain Lure systems, Positive Real Lemma, Mean Square Stability
\end{IEEEkeywords}

\section{Introduction}
The problem of control and estimation over stochastic input and output channels is of importance in control of systems over networks \cite{networksystems_specialissue}. Specifically systems with packet drop communication channels at input and output, modeled as Bernoulli erasure channels has garnered much attention \cite{networksystems_martin_gupta,network_basar_tamar,Luca_03_kalmanfiltering,scl04,murray_epstein,networksystems_foundation_sastry}. These communication channels are modeled as a multiplicative uncertainty with Bernoulli random variable. Majority of the research regarding control and estimation over erasure communication channels has been done for linear time invariant (LTI) systems. Control and estimation over fading channels was studied by modeling the problem in the robust control framework \cite{scl04}. An LMI formulation was provided for packet-erasure channels and a necessary and sufficient condition was provided for single input single output (SISO) systems. The sufficient conditions relates a tradeoff between packet-erasure probability and product of unstable eigenvalues of the system matrix, which give the volume expansion rate of the system. linear quadratic regulator control is studied for LTI systems with erasure communication channels for both Transmission Control Protocol (TCP) and User Datagram Protocol (UDP) \cite{network_basar_tamar}. Kalman filtering over lossy communication channels modeled as packet-drop channels for LTI systems has also been studied for bounded variance stability conditioned upon the channel probabilities \cite{Luca_03_kalmanfiltering,networksystems_foundation_sastry}. A necessary and sufficient condition is derived for the channel communication probability based on the maximum rate of expansion of the linear systems as given by the maximum eigenvalue of the system. The stability condition has also been relaxed to stability in probability for packet-erasure communication channels \cite{murray_epstein}. These results were first extended for nonlinear systems for the problems of observation and state feedback control for linear time varying (LTV) systems and ultimately for nonlinear systems \cite{vaidya_erasure_scl,amit_observation_jouranl,amit_ltv_jouranl}. The framework based on the theory of random dynamical system was adapted in \cite{Vaidya_erasure_stablization,vaidya_erasure_scl} to prove necessary and sufficient conditions for scalar systems. The design problem of a mean square stable state feedback control for nonlinear systems \cite{vaidya_erasure_scl} provides a necessary and sufficient condition that mimics the conditions for mean square stable state feedback control of LTI systems \cite{scl04}. Similar random dynamical system framework was utilized in \cite{amit_observation_jouranl,amit_ltv_jouranl} to develop necessary conditions for mean square stabilization and observation of general nonlinear and linear time varying systems over uncertain channels. It is shown that global nonequilibrium dynamics of the nonlinear system play an important role in determining the minimum Quality of Service (QoS) of the erasure channel. The necessary condition for stabilization and observation of nonlinear systems were expressed in terms of the positive Lyapunov exponents of the nonlinear systems capturing the nonequilibrium dynamics and the statistics of channel uncertainty.

In this paper, we continue this line of research to provide a sufficient condition for mean square stabilization for a class of nonlinear systems in Lure form.
Deriving non-trivial sufficient condition for the stabilization of general nonlinear system over uncertain channels is a challenging problem. Hence, we focus on a particular class of nonlinear systems namely nonlinear systems in {\it Lure} form. A system in Lure form consist of feedback interconnection of LTI system and static nonlinearity. Systems in Lure form are widely studied in control system community because several systems in engineering application can be modeled as feedback interconnection of LTI system and static nonlinearity. Systematic analysis tools in the form of Positive Real Lemma (PRL) and Kalman-Yakubovich-Popov (KYP) Lemma exist for the synthesis and design of system in Lure form \cite{haddad_bernstein_DT,murat_kokotovic,ibrir_circle_criterion,Johansson20021557}. We make use of these powerful analysis methods in the development of the main results of this paper. The main contributions of the results derived in this paper are as follows. First, we discover a stochastic variant of PRL for Lure systems with multiplicative parametric uncertainty in the system matrices. This stochastic PRL is then used to provide a synthesis method for the design of observer based controller for the stabilization of nonlinear systems in Lure form over uncertain channels at input and outputs. We prove that the design of an observer-based controller for a general nonlinear systems satisfying some conditions on  the state feedback stabilization and observer design, interacting with the controller over packet-drop channels, enjoys the separation property i.e., the stabilizing controller and the observer can be designed independent of each other. The main result of this paper on the sufficiency condition has a interesting interpretation and helps us understand how the passivity property of the system can be traded off to account for channel uncertainty in feedback loop.

This paper is an extended version of the paper that appeared in the proceedings of American Control Conference, 2012 \cite{amit_lure_conference}. The paper includes important new additions compared to \cite{amit_lure_conference}. In particular, the separation principle for the design of observer-based controller for general nonlinear systems in the presence of channel uncertainty is new to this paper. This new result has allowed us to provide simpler and improved sufficient conditions for stability as compared to \cite{amit_lure_conference}.

The organization of the paper is as follows. In section \ref{prelim}, we provide some preliminaries and the stability definition used in this paper. The main results of this paper are proved in section \ref{main}. Simulation results are provided in section \ref{section_sim} followed by conclusions in section \ref{section_conclusion}.

\section{Preliminaries}
\label{prelim}
Consider the nonlinear system in Lur'e form with channel uncertainty at the inputs and outputs (refer to Fig. \ref{schematic}), described by following equations
\begin{align}
\label{nonlinear_sys}
x_{t+1} &= Ax_t - B\phi(y_t) + B \tilde u_t,\;\;\; y_t = Cx_t \\
\tilde u_t&=\gamma_t u_t,\;\;\;\tilde y_t = \xi_ty_t,
\end{align}
where, $x_t\in \mathbb{R}^N$, $\tilde u_t\in \mathbb{R}^M$, and $y_t\in \mathbb{R}^M$ are the state, control input, and output vector respectively. The matrices $B$ and $C$ are assumed to have full column rank and full row rank respectively. The random variables $\gamma_t$ and $\xi_t$ model the uncertainty at the input and output channels respectively.

We make the following assumptions on the system dynamics, channel uncertainties, and information structure between the input and output channels.
\begin{assumption}\label{assume_nonlinearity}
The nonlinearity $\phi(y) \in \mathbb{R}^M$ is globally Lipschitz, at least $C^1$ and monotonic non-decreasing function of $y \in \mathbb{R}^M$. Furthermore, this nonlinearity satisfies the following sector conditions
\begin{align}
\label{sector_control}
\phi(y)'\left(y - D_1 \phi(y)\right) > 0
\end{align}
\begin{align}
\label{sector_observer}
\left(\phi(y_1)-\phi(y_2)\right)'\left(\left(y_1 - y_2\right) - D_2 \left(\phi(y_1)-\phi(y_2)\right)\right) > 0
\end{align}
where $D_1+D_1' > 0$ and $D_2 + D_2' > 0$.
\end{assumption}
\begin{remark}
With no stability assumption on the system matrix $A$ it is possible to transform a nonlinearity in the feedback loop to satisfy the above conditions after appropriate loop transformation \cite{Khalil_book} provided the nonlinearity is globally Lipschitz and $C^1$. The matrices $D_1$ and $D_2$ are required to be positive definite to ensure that the nonlinearities remain in the positive quadrant to guarantee that the nonlinearity is passive in nature. This can be seen by rewriting \eqref{sector_control}, for $D_1'+D_1 > 0$ as follows,
\begin{align*}
2\phi(y)'y = \phi(y)'y + y'\phi(y) > \phi(y)'D_1\phi(y)+\phi(y)'D_1'\phi(y) = \phi(y)'(D_1'+D_1)\phi(y) > 0
\end{align*}
Thus the nonlinearity is strictly passive in nature if and only if there exists $D_1$ positive definite that satisfies \eqref{sector_control}. The argument for \eqref{sector_observer} is identical.
\end{remark}
\begin{assumption}\label{assume_uncertainty} We assume that the input channel uncertainty $\gamma_t\in\{0,1\}$ 
and output channel uncertainty $\xi_t\in\{0,1\}$. Both the channel uncertainties are assumed to be independent identically distributed (i.i.d) random variables with following statistics
\begin{eqnarray}
\textrm{Prob}\{\gamma_t = 1\} = p,\;\;\forall t,\quad \textrm{Prob}\{\xi_t = 1\} = q,\;\;\forall t.\label{input_opt_uncertainty}
\end{eqnarray}
To make the problem interesting, we assume that  $0<p,q<1$.
\end{assumption}

\begin{figure}[h]
\begin{center}
\vspace{-0.1in}
\includegraphics[width=20em]{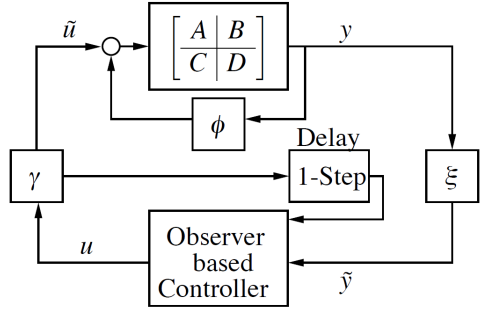}
\vspace{-0.3in}
\caption{Observer based controller for Lur'e system over uncertain channels}
\label{schematic}
\end{center}
\end{figure}

\begin{assumption}\label{TCP_protocol}
We assume that the observer receives the acknowledgment about the input  channel state with one-step delay (refer to Fig. \ref{schematic}). This acknowledgement structure is called as Transmission Control Protocol (TCP) \cite{networksystems_foundation_sastry,network_basar_tamar}.
\end{assumption}
\begin{remark}
We give a brief explanation for the Assumption \ref{TCP_protocol}. The wireless communication channel at the sensor and actuator channels is modeled as a packet erasure channel with an acknowledgement structure, which provides the sender with a binary acknowledgement of whether the packet was received or lost. The acknowledgement communication is assumed to be lossless. As the communication channels are of the binary erasure type and the ackenowledgement structure is binary, the communication channel has been modelled using a Bernoulli random variable. Such packet erasure channel models with an acknowledgement structure are utilized in modelling communication channels following the TCP protocol, and have been previously used in literature \cite{network_basar_tamar}. We outline the process of communication over the erasure channels in the feedback loop over one time step. Suppose at any given instant we have the observed state for $x_t$ given by $\hat{x}_t$. Using $\hat{x}_t$ the controller generates the input to be used $u_t = K(\hat{x}_t)$, which is further multiplied by $\gamma_t$ as the control is applied over the erasure channel. As the system generates $x_{t+1}$ using $\gamma_t$ and $u_t$, we obtain the output $y_t$. The output is communicated to the observer through an uncertain channel with multiplicative uncertainty $\xi_t$. Along with the output he channel also relays an acknowledgement ($\xi_{ack}$) for receipt of signal where $\xi_{ack} = 1$ indicates signal was received while $\xi_{ack} = 0$ indicates signal was lost. If the uncertain channel is a Bernoulli channel $\xi_t = 1 \Leftrightarrow \xi_{ack}=1$ and $\xi_t = 0 \Leftrightarrow \xi_{ack}=0$. Thus henceforth, with abuse of notation we will denote the acknowledgement (for both input and output channels) as the uncertainties $\gamma_t$ and $\xi_t$ themselves. Using the output $y_t$, uncertainty $\xi_t$, the control $u_t$ and the control channel uncertainty value $\gamma_t$ from the previous step, the observer generates the observation for $x_{t+1}$ given by the observed state $\hat{x}_{t+1}$. This is then used by the controller to generate the next control output $u_{t+1}$ which will be used to generate the next state $x_{t+2}$. Thus the TCP acknowledgement structure allows us to use the control uncertainty to generate the observed state in the following step.
\end{remark}
We next provide the definition of Quality of Service (QoS) for a channel.
\begin{definition}\label{qos}[QoS] For the channel with
multiplicative stochastic uncertainty $\zeta \in  W \subseteq \mathbb{R}$  with finite
mean $\mu$ and finite variance $\sigma^2$, the quality of service
of the channel is defined as
\[{\mathcal Q}=\frac{\mu^2}{\sigma^2}.\]
\end{definition}
\begin{remark} The definition of QoS is related to other performance measure in statistics as coefficient of variation or in signal processing to popular signal to noise ratio (SNR). The signal to noise ratio is defined as $SNR=\frac{\mu}{\sigma}$ (sometime defined as $SNR=\frac{\mu^2}{\sigma^2}$ to ensure positivity). Thus the QoS as defined above is a practical useful measure of performance. For input erasure channel (\ref{input_opt_uncertainty}) with non-erasure probability $p$, the QoS ${\mathcal Q}_{\gamma}=\frac{p}{1-p}$ and for output erasure channel (\ref{input_opt_uncertainty}) QoS is given by ${\cal Q}_{\xi} = \frac{q}{1-q}$.
\end{remark}
The notion of stability we adapt to analyze the feedback control system (\ref{schematic}) is mean square exponential (MSE) stability. We define this stability in the context of a random dynamical system of the form 
\begin{eqnarray}
x_{t+1}=S(x_t,\varsigma_t)\label{general_rds},
\end{eqnarray}
where, $x_t \in X\subseteq \mathbb{R}^N$, $\varsigma_t \in W \subseteq \mathbb{R}$ for $t \geq 0$, are i.i.d random variables with finite mean and second moment. The system mapping $S: X\times W\rightarrow X$ is assumed to be at least $C^1$ with respect to $x_t\in X$ and measurable w.r.t  $\varsigma_t$. We assume that $x=0$ is an equilibrium point i.e., $S(0,\varsigma_t)=0$. The following notion of stability can be defined for random dynamical systems(RDS) \cite{Hasminskii_book,applebaum}.
\begin{definition}[ Mean Square  Exponential (MSE) Stable]\label{EMSS_def} The solution $x=0$ is said to be MSE stable  for $x_{t+1} = S(x_t,\varsigma_t)$ if there exists a positive constants $M<\infty$ and $\beta<1$ such that
\[E_{\varsigma_0^t}\left [\parallel x_{t+1}\parallel^2 \right]\leq M\beta^t\|x_0\|^2,\;\; \forall t\geq 0\]
for almost all w.r.t. Lebesgue measure initial condition $x_0\in X$ where $E_{\varsigma_0^t}[\cdot]$ is the expectation taken over the sequence $\{\varsigma(0),\ldots,\varsigma_t\}$.
\end{definition}

\section{Main Results}
\label{main}

The first main result of this paper provides sufficient condition for the stabilization of Lure system  expressed in terms of statistics of channel uncertainties  and the solution of Riccati equation. The result also provides synthesis method for the design of observer-based controller robust to channels uncertainties.
\begin{theorem}
\label{combined_sufficiency_p}
Consider the observer-based controller design problem for the system in Lure form (\ref{nonlinear_sys}) (refer to Fig. \ref{schematic} for the schematic) satisfying Assumptions \ref{assume_nonlinearity} \ref{assume_uncertainty}, and \ref{TCP_protocol}. The feedback control system is mean square exponentially stable if there exists $P^* > 0$, $Q^* > 0$ such that following conditions are satisfied
\begin{align}
\label{control_p}
\Sigma_1 - B'P^*B &> \left(\frac{1}{1+{\mathcal Q}_{\gamma}}\right)\left(\Sigma_1 + B'P^*B\right)\\
\label{observer_q}
\Sigma_2 - CQ^*C' &> \left(\frac{1}{1+{\mathcal Q}_{\xi}}\right)\left(\Sigma_2 + CQ^*C'\right),
\end{align}
where, ${\cal Q}_{\gamma}=\frac{p}{1-p}$ is the QoS of the input channel and ${\cal Q}_{\xi}=\frac{q}{1-q}$ is the QoS of the output channel. The matrix $P^*$ and $Q^*$ satisfy following Riccati equations
\begin{align*}
P^* &= A_1'P^*A_1 - A_1P^*B\left(\Sigma_1 + B'P^*B\right)^{-1}B'PA_1 + C'\Sigma_1^{-1}C\\
Q^* &= A_2Q^*A_2' - A_2Q^*C'\left(\Sigma_2 + CQ^*C'\right)^{-1}CQ^*A_2' + B\Sigma_2^{-1}B',
\end{align*}
where, $\Sigma_1 = D_1 + D_1'$, $\Sigma_2 = D_2+D_2'$, $A_1 = A - B\Sigma_1^{-1}C$, $A_2 = A - B\Sigma_2^{-1}C$. Furthermore, the controller gain $K^*$ and observer gain $L^*$ are given by following expressions
\begin{align*}
K^* &= -(B'P^*B)^{-1}B'P^*A_1\\
L^* &= A_2Q^*C'(CQ^*C')^{-1}.
\end{align*}

\end{theorem}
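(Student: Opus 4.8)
The plan is to split the analysis into three parts: an observer--error reduction that exploits the TCP acknowledgement structure to produce a one-directional cascade, a stochastic Positive Real Lemma applied separately to the control and the error subsystems, and a separation (cascade) argument that combines them. First I would introduce the estimation error $\tilde x_t = x_t - \hat x_t$ and take the observer
\begin{align*}
\hat x_{t+1} = A\hat x_t - B\phi(C\hat x_t) + B\gamma_t u_t + L\xi_t C(x_t - \hat x_t), \qquad u_t = K\hat x_t .
\end{align*}
Because Assumption \ref{TCP_protocol} lets the observer reuse the realized $\gamma_t$ and $u_t$, the term $B\gamma_t u_t$ cancels in the error equation and leaves the \emph{autonomous} error dynamics
\begin{align*}
\tilde x_{t+1} = (A - \xi_t L C)\tilde x_t - B\bigl(\phi(Cx_t) - \phi(C\hat x_t)\bigr),
\end{align*}
whereas substituting $\hat x_t = x_t - \tilde x_t$ turns the plant into $x_{t+1} = (A + \gamma_t B K)x_t - B\phi(Cx_t) - \gamma_t B K\tilde x_t$. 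This is already the cascade $\tilde x \to x$ on which the separation principle rests.

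The core is a stochastic Positive Real Lemma for the Bernoulli-gain Lure system $z_{t+1} = (A + \gamma_t B K)z_t - B\phi(Cz_t)$. With $\gamma_t \in \{0,1\}$ and $\mathrm{Prob}\{\gamma_t = 1\} = p$, the candidate $V(z) = z'P^* z$ gives the exact convex combination
\begin{align*}
E_{\gamma_t}\bigl[V(z_{t+1}) \mid z_t\bigr] = (1-p)\,\eta'P^*\eta + p\,(\eta + B K z_t)'P^*(\eta + B K z_t), \qquad \eta = A z_t - B\phi(Cz_t),
\end{align*}
since $\gamma_t^2 = \gamma_t$. After the loop transformation $\phi = \Sigma_1^{-1}Cz + \bar\phi$, which replaces $A$ by $A_1 = A - B\Sigma_1^{-1}C$, the sector inequality \eqref{sector_control} becomes $\bar\phi'\Sigma_1\bar\phi < z'C'\Sigma_1^{-1}C z$; feeding this into an S-procedure and completing the square accounts for the $C'\Sigma_1^{-1}C$ term in the Riccati. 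Choosing $K^*$ to annihilate the closed-loop branch in the $P^*$-weighted $B$-direction, i.e. $B'P^*(A_1 + B K^*) = 0$, reproduces the stated $K^* = -(B'P^*B)^{-1}B'P^*A_1$, while requiring the remaining quadratic form in $z$ to be negative definite yields the Riccati for $P^*$ and, after collecting the $\gamma_t$-weighted terms and using $(1 + \mathcal Q_\gamma)^{-1} = 1-p$, the inequality \eqref{control_p}. The observer condition \eqref{observer_q}, the gain $L^*$, and the Riccati for $Q^*$ then follow from the dual computation on the autonomous error system, which is the transpose problem $(A,B,C)\mapsto(A',C',B')$ governed by the sector \eqref{sector_observer} with $\xi_t$ in the role of $\gamma_t$.

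Finally I would assemble mean square exponential stability of the full cascade. The stochastic PRL applied to the error system gives $E\|\tilde x_t\|^2 \le M_2 \beta_2^{\,t}\|\tilde x_0\|^2$ with $\beta_2 < 1$, and applied to the plant it gives $E[V(x_{t+1}) \mid x_t,\tilde x_t] \le \beta_1 V(x_t) + (\text{terms bilinear and quadratic in }\tilde x_t)$. Bounding the coupling through $-\gamma_t B K\tilde x_t$ by Young's inequality and inserting the geometric decay of $E\|\tilde x_t\|^2$, a standard discrete comparison argument then shows that $E\|x_t\|^2$ decays geometrically as well, which is exactly Definition \ref{EMSS_def}. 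I expect the decisive obstacle to be the stochastic PRL itself: the multiplicative Bernoulli gain forces the completion of squares to be carried out on the $p$-weighted form above rather than on a single closed-loop matrix, and one must check that the same $P^*$ and $K^*$ that solve the deterministic-looking Riccati also certify a \emph{strict} expected decrease precisely under \eqref{control_p}. A secondary point is to verify that the sector nonlinearity does not break the cascade bound, i.e. that the plant subsystem is mean square input-to-state stable with respect to the decaying error $\tilde x_t$ rather than only internally stable.
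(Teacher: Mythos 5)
Your proposal follows essentially the same route as the paper: the identical three-part decomposition into a stochastic Positive Real Lemma for the state-feedback loop (the Bernoulli expectation being exactly your two-point convex combination, with the same loop transformation to $A_1$ and the same gain $K^* = -(B'P^*B)^{-1}B'P^*A_1$, which the paper obtains by trace minimization of a dual Riccati inequality rather than your annihilation condition $B'P^*(A_1+BK^*)=0$), the dual computation for the autonomous observer error dynamics made possible by the TCP acknowledgement cancelling $\gamma_t B u_t$, and a separation argument combining the two. The only notable difference is in that last step, where the paper proves a general nonlinear separation theorem (Theorem \ref{separation_thm}) via the mean value theorem and Lipschitz bounds, while you give the Lure-specific cascade bound with Young's inequality and a discrete comparison argument --- the same idea in specialized form.
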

We postpone the proof of this theorem till the end of this section and now provide some intuition behind the sufficiency condition for mean square exponential stability.

The conditions (\ref{control_p}) and (\ref{observer_q}) can be interpreted as generalizations of the positivity conditions  from deterministic PRL (i.e., $\Sigma_1 - B'P^*B > 0$ and $\Sigma_2 - CQ^*C' > 0$). Thus for the uncertain system we require $\Sigma_1 - B'P^*B$ and $\Sigma_2 - CQ^*C'$ to be strictly bounded below and this lower bound is a function of channel uncertainty. The closer these lower bounds are to zero the the amount of tolerable uncertainty decreases.
We notice from Eqs. (\ref{control_p}) and (\ref{observer_q}) that the sufficient conditions involving input and output channels uncertainty are decoupled. This implies that the separation principle applies for the design of observer-based controller for the system in Lure form with input and output channels uncertainties. The observer-based controller problem can be decomposed into two separate problems of design of full state feedback controller and observer design problems. The separation property is in fact the consequence of assumed TCP like acknowledgment structure (Assumption \ref{TCP_protocol}) \cite{network_basar_tamar,networksystems_foundation_sastry}. Equations (\ref{control_p}) and (\ref{observer_q}) then provides sufficient conditions for the mean square stabilization of Lure system with full state feedback control and for the observer error dynamics respectively.
We now outline the various steps involved in the proof of Theorem \ref{combined_sufficiency_p}.
\begin{enumerate}
\item In Theorem \ref{observer_condition_thm}, we prove results for the design of observer for system in Lure form. Theorem \ref{observer_condition_thm} provides bound on the minimum allowable erasure probability of the output channel to maintain mean square exponential stability of the observer error dynamics.
\item We provide the solution to the full state feedback stabilization problem with channel uncertainty at the input in Theorem \ref{controller_condition_thm}.
\item Finally using Assumption \ref{TCP_protocol}, we prove in Theorem \ref{separation_thm} that the observer-based controller design problem for the Lure system over uncertain channel enjoys the separation property.
\end{enumerate}

Lemma \ref{lyapunov_thm} provides sufficiency condition for the mean square exponential stability of  general stochastic dynamical systems.
\begin{lemma}
\label{lyapunov_thm}
The stochastic dynamical system (\ref{general_rds}) is exponentially mean square stable as given in Definition \ref{EMSS_def} if there exists a Lyapunov function $V_S(x_t)=x_t'Px_t$ for some matrix $P = P' >0$ and positive constants $c_1$, $c_2$ and $c_3$ such that
\begin{align}
\label{bound_condn_lure}
c_1\parallel x_t \parallel^2 \leq V_S(x_t) &\leq c_2\parallel x_t \parallel^2\\
\label{lyapunov_condition}
E_{\zeta_t}\left[V_S(x_{t+1})\right] - V_S(x_t) &< -c_3 \parallel x_t \parallel^2; \forall t\geq 0.
\end{align}
\end{lemma}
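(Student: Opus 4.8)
The plan is to run the standard Lyapunov iteration argument, converting the additive one-step decrease in (\ref{lyapunov_condition}) into a multiplicative contraction and then chaining it across the time horizon. First I would use the upper bound in (\ref{bound_condn_lure}): since $V_S(x_t)\le c_2\|x_t\|^2$ gives $\|x_t\|^2\ge V_S(x_t)/c_2$, substituting into (\ref{lyapunov_condition}) yields
\begin{align}
E_{\varsigma_t}\left[V_S(x_{t+1})\right] &< \left(1-\frac{c_3}{c_2}\right)V_S(x_t).
\end{align}
Writing $\beta:=1-c_3/c_2$, the fact that $V_S\ge 0$ forces $0\le\beta<1$: for $x_t\neq 0$ we have $V_S(x_t)>0$ by the lower bound in (\ref{bound_condn_lure}), and $0\le E_{\varsigma_t}[V_S(x_{t+1})]\le \beta V_S(x_t)$ rules out $\beta<0$, while $c_3>0$ gives $\beta<1$.

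Next I would iterate this bound over $t$ using the law of iterated expectations. The subtle point is that $x_t$ is a measurable function of the past noise $\varsigma_0,\ldots,\varsigma_{t-1}$, the displayed inequality holds pointwise in $x_t$, and $\varsigma_t$ is i.i.d. and independent of that past; hence taking the expectation over $\{\varsigma_0,\ldots,\varsigma_{t-1}\}$ preserves the inequality and gives
\begin{align}
E_{\varsigma_0^t}\left[V_S(x_{t+1})\right] &\le \beta\,E_{\varsigma_0^{t-1}}\left[V_S(x_t)\right].
\end{align}
Unrolling this recursion down to $t=0$ and using $V_S(x_0)=x_0'Px_0$ yields $E_{\varsigma_0^t}[V_S(x_{t+1})]\le \beta^{t+1}V_S(x_0)$.

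Finally I would translate back to the state norm using both sides of (\ref{bound_condn_lure}): the lower bound gives $c_1\,E_{\varsigma_0^t}[\|x_{t+1}\|^2]\le E_{\varsigma_0^t}[V_S(x_{t+1})]$ and the upper bound gives $V_S(x_0)\le c_2\|x_0\|^2$, so
\begin{align}
E_{\varsigma_0^t}\left[\|x_{t+1}\|^2\right] &\le \frac{c_2}{c_1}\,\beta^{t+1}\|x_0\|^2 = M\beta^t\|x_0\|^2,
\end{align}
with $M:=(c_2/c_1)\beta<\infty$ and $\beta<1$, which is precisely the estimate demanded by Definition \ref{EMSS_def}.

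I expect the only nonroutine step to be the rigorous justification of the iteration, namely that the pointwise conditional drift inequality can be propagated through the expectation over the full noise history. This hinges on the measurability of $S(\cdot,\varsigma)$ in the RDS (\ref{general_rds}) together with the i.i.d.\ structure, which together make the tower property applicable; everything after that is elementary algebra and a geometric unrolling.
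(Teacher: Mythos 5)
Your proposal is correct and follows essentially the same route as the paper's own proof: substitute the upper bound $V_S(x_t)\le c_2\|x_t\|^2$ into the drift condition to obtain the contraction $E_{\varsigma_t}\left[V_S(x_{t+1})\right] < \left(1-\frac{c_3}{c_2}\right)V_S(x_t)$, iterate over the noise history, and convert back to the state norm via $c_1\|x\|^2\le V_S(x)\le c_2\|x\|^2$. Your two refinements --- deducing $0\le\beta<1$ from nonnegativity of $V_S$ (where the paper simply remarks that $c_2$ can be chosen larger than $c_3$) and spelling out the measurability and i.i.d.\ structure that justify the tower-property step the paper performs silently --- are welcome additions but do not constitute a different argument.
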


\begin{proof}
From (\ref{lyapunov_condition}) we get
\begin{align}
\label{lyap_condn_mod}
E_{\zeta_t}\left[V_S(x_{t+1})\right] < V_S(x_t) -c_3 \parallel x_t \parallel^2,\; \forall t\geq 0.
\end{align}
Substituting $V_S(x_t)$ for $\parallel x_t \parallel^2$ from (\ref{bound_condn_lure}) into (\ref{lyap_condn_mod}) we get
\begin{align}
E_{\zeta_t}\left[V_S(x_{t+1})\right] < \left(1 - \frac{c_3}{c_2}\right)V_S(x_t).
\end{align}
Let $1 - \frac{c_3}{c_2} := \beta_1<1$, since $c_2$ can be chosen greater than $c_3$. Taking expectation over $\zeta_{0}^{t}$ in the above equation we get
\begin{align*}
E_{\zeta_{0}^{t}}\left[V_S(x_{t+1})\right] < \beta_1E_{\zeta_{0}^{t-1}}\left[V_S(x_t)\right]
< \beta_1^2 E_{\zeta_{0}^{t-1}}\left[ V_S(x_{t-1})\right]<\beta_{1}^{t+1}V_S(x_0).
\end{align*}
Finally using the bounds on $V_S(x)$ from (\ref{bound_condn_lure}) in the above equation we get the desired result.
\end{proof}

We propose a observer design with linear gain and is similar to the circle criteria-based observer design proposed in \cite{ibrir_circle_criterion,murat_kokotovic}. The observer dynamics is assumed to be of the form:
\begin{align}
\label{observer_sys}
\hat{x}_{t+1} &= A\hat{x}_t - B\phi\left(\hat{y}_t\right) + \gamma_tBu_t + L\left(\tilde y_t-\tilde{\hat{y}}_t\right)\\
\hat{y}_t &= C\hat{x}_t,\quad \tilde{\hat{y}}_t = \xi_t\hat{y}_t.
\end{align}
This gives the error dynamics, $e_t:=x_t-\hat x_t$, to be
\begin{align}
\label{error_sys1}
e_{t+1} = \left(A-\xi_tLC\right)e_t - B\left(\phi(y_t)-\phi(\hat{y}_t)\right),\;\;\; w_t = Ce_t,
\end{align}
where, $w_t:=y_t-\hat y_t$.
\begin{remark}\label{remark_ouput_bernoulli} It is important to notice that because of the erasure channel uncertainty at the output channel it is possible to assume that the observer has access to channel erasure state, $\xi_t$. In particular, whenever the system output, $\tilde y_t$, is zero (non-zero) the channel erasure state can be assumed to be equal to zero (one).
\end{remark}
 Writing $\psi(t,w_t) := \left(\phi(y_t)-\phi(\hat{y}_t)\right)$ we can write the error dynamics as
\begin{align}
\label{error_sys2}
e_{t+1} = \left(A-\xi_tLC\right)e_t - B\psi(t,w_t),\;\;\;
w_t = Ce_t,
\end{align}
where it is clear that $\psi(t,w_t)$ satisfies the sector condition $\psi(t,w_t)'\left(w_t - D_2\psi(t,w_t)\right) > 0$ as given by (\ref{sector_observer}). Theorem \ref{observer_condition_thm} is the main result on observer design for system in Lure form (\ref{nonlinear_sys}).
\begin{theorem}
\label{observer_condition_thm}
Consider a nonlinear system in Lure form (\ref{nonlinear_sys}) satisfying Assumptions \ref{assume_nonlinearity}, \ref{assume_uncertainty}, and \ref{TCP_protocol} and the observer dynamics as given in (\ref{observer_sys}). Then the error dynamics (\ref{error_sys2}) is mean square exponentially stable if
\begin{align}
\left(\Sigma_2 -CQ^*C'\right)> \left(\frac{1}{1+{\mathcal Q}_{\xi}}\right)\left(\Sigma_2 + CQ^*C'\right),
\end{align}
where, $\Sigma_2 = D_2+D_2' > 0$ and $Q^* = (Q^*)' > 0$ satisfies the Riccati equation
\begin{align*}
Q^* = A_2Q^*A_2' - A_2Q^*C'(\Sigma_2 + CQ^*C')^{-1}CQ^*A_2' + B\Sigma_2^{-1}B',
\end{align*}
with $A_2 := A-B\Sigma_2^{-1}C$. Furthermore, the observer gain $L$ is given by,
\begin{align*}
L = A_2Q^*C'(CQ^*C')^{-1}.
\end{align*}
\end{theorem}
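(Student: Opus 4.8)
The plan is to invoke Lemma \ref{lyapunov_thm} with the quadratic Lyapunov function $V(e_t) = e_t'(Q^*)^{-1}e_t$, which is the natural ``dual'' choice: $Q^*$ solves a filtering-type Riccati equation and therefore plays the role of an error-covariance matrix. Since $Q^* > 0$, the sandwich bound \eqref{bound_condn_lure} holds immediately with $c_1 = \lambda_{\min}((Q^*)^{-1})$ and $c_2 = \lambda_{\max}((Q^*)^{-1})$, so the entire burden falls on establishing the strict decrease \eqref{lyapunov_condition}.

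First I would remove the linear part of the sector nonlinearity by the loop transformation $\bar\psi := \psi(t,w_t) - \Sigma_2^{-1}Ce_t$. Substituting into \eqref{error_sys2} turns the error recursion into $e_{t+1} = (A_2 - \xi_t LC)e_t - B\bar\psi$, with $A_2 = A - B\Sigma_2^{-1}C$ exactly as in the statement, and a short computation shows the sector condition \eqref{sector_observer} is equivalent to the \emph{symmetric} condition $\bar\psi'\Sigma_2\bar\psi < e_t'C'\Sigma_2^{-1}Ce_t$ (the cross terms cancel because $\Sigma_2 = D_2 + D_2'$). This is the form on which the Positive Real / KYP machinery operates cleanly.

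Next I would take the conditional expectation over $\xi_t$. Because $\xi_t \in \{0,1\}$ is Bernoulli (Assumption \ref{assume_uncertainty}), $E_{\xi_t}[V(e_{t+1})]$ is the convex combination $q\,V^{(1)} + (1-q)\,V^{(0)}$, where $V^{(1)}$ uses the corrected matrix $A_2 - LC$ and $V^{(0)}$ uses $A_2$; equivalently, this isolates a \emph{variance} term $q(1-q)\,e_t'C'L'(Q^*)^{-1}LCe_t$ that is absent from the deterministic theory and that ultimately carries the channel statistics into the bound. I would then form $\mathcal{L} := E_{\xi_t}[V(e_{t+1})] - V(e_t) + \lambda\big(e_t'C'\Sigma_2^{-1}Ce_t - \bar\psi'\Sigma_2\bar\psi\big)$ for a multiplier $\lambda > 0$ (the S-procedure): since the bracketed term is nonnegative along trajectories, making $\mathcal{L}$ negative definite in $(e_t,\bar\psi)$ forces $E_{\xi_t}[V(e_{t+1})] - V(e_t) \leq \mathcal{L} \leq -c_3\|e_t\|^2$, which is precisely \eqref{lyapunov_condition}. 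Writing $\mathcal{L}$ as a quadratic form, the $\bar\psi$-block $B'(Q^*)^{-1}B - \lambda\Sigma_2$ is negative definite once $\lambda$ is taken large enough, and a Schur complement with respect to $\bar\psi$ leaves a single matrix inequality in $e_t$.

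The \emph{main obstacle} is the final step: showing that this Schur-complement inequality, once the gain $L = A_2Q^*C'(CQ^*C')^{-1}$ and the Riccati identity for $Q^*$ are substituted, collapses to precisely the scalar-structured QoS condition \eqref{observer_q}. I expect this to require completing the square on the $A_2 - LC$ term using the filtering Riccati equation and then applying the matrix inversion lemma to combine the mean and variance contributions; the delicate point is that the variance term $q(1-q)C'L'(Q^*)^{-1}LC$ must recombine with the Riccati residual so that the $q$-dependence condenses into the single factor $1/(1+\mathcal{Q}_\xi) = 1-q$, with feasibility over admissible $\lambda$ reducing to \eqref{observer_q}. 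This recombination is exactly the content of the ``stochastic Positive Real Lemma'' advertised in the introduction, and verifying that it reproduces \eqref{observer_q} rather than a more conservative bound is where the real work lies. Uniformity in $t$ is then automatic, since the data $(A_2,B,C,L,\Sigma_2)$ and the sector bound are time-invariant, so a single $c_3 > 0$ serves for all $t$ and Lemma \ref{lyapunov_thm} yields MSE stability.
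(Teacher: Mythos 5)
Your scaffold is, in substance, the paper's own proof read through the dual variable: your $V(e_t)=e_t'(Q^*)^{-1}e_t$ is exactly the paper's stochastic-PRL matrix $P_o$ after its inversion $Q_o:=P_o^{-1}$; your loop transformation $\bar\psi=\psi-\Sigma_2^{-1}Ce_t$ with the symmetrized bound $\bar\psi'\Sigma_2\bar\psi<e_t'C'\Sigma_2^{-1}Ce_t$ is the paper's passage from $\mathcal{A}_o(\xi)=A-\xi LC$ to $\mathcal{A}_{2o}(\xi)=\mathcal{A}_o(\xi)-B\Sigma_2^{-1}C$ (its appeal to Lancaster); and your mean--variance split with the $q(1-q)C'L'(Q^*)^{-1}LC$ term is the explicit evaluation of the paper's $E_\xi[\cdot]$. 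The genuine gap is that you stop exactly where the theorem lives: you never show that, with $L=A_2Q^*C'(CQ^*C')^{-1}$ and the Riccati identity for $Q^*$, your Schur-complement inequality in $e_t$ reduces to \eqref{observer_q}; you only announce that you expect the $q$-dependence to condense into the factor $1-q$. The paper carries this out, and does it in the dual: it transposes the primal inequality (\ref{primal_ineq}) into (\ref{dual_ineq}), expands it as (\ref{dual_Riccati_observer}), obtains $L$ as the trace minimizer of the right-hand side, substitutes to get (\ref{optimal_Riccati}), and then compares with the filtering Riccati equation so that everything collapses to
\[
q(CQ^*C')^{-1}-(1-q)\left(\Sigma_2-CQ^*C'\right)^{-1}>\left(\Sigma_2+CQ^*C'\right)^{-1},
\]
which follows from \eqref{observer_q} via the rearrangement $\Sigma_2-CQ^*C'>(1-q)(\Sigma_2+CQ^*C')\Leftrightarrow q\Sigma_2>(2-q)CQ^*C'$ and monotonicity of matrix inversion. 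Note also that $L$ is the trace minimizer for the \emph{dual} form; in your primal form with mean matrix $A_2-qLC$ it is not an evident optimizer, and your direct route will in any case need the primal--dual transposition under expectation (for a random matrix $X$, $I>E[X'X]$ and $I>E[XX']$ are not equivalent in general, so this step must be justified for the specific two-point structure rather than cited generically).

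A secondary but concrete error: the claim that the $\bar\psi$-block $B'(Q^*)^{-1}B-\lambda\Sigma_2$ becomes negative definite ``once $\lambda$ is taken large enough'' is not usable, because $\lambda$ also multiplies the positive term $\lambda\, e_t'C'\Sigma_2^{-1}Ce_t$ in the $e_t$-block, so enlarging $\lambda$ destroys negativity there; by homogeneity ($\mathcal{L}$ with Lyapunov matrix $P$ and multiplier $\lambda$ equals $\lambda$ times $\mathcal{L}$ with $P/\lambda$ and multiplier $1$) one may normalize $\lambda=1$, after which $\Sigma_2-B'(Q^*)^{-1}B>0$ is a condition that must be \emph{derived} --- in the paper it follows from $Q^*\geq B\Sigma_2^{-1}B'$, read off the Riccati equation, together with a Schur-complement argument --- not arranged by choice of multiplier. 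Your sandwich bounds $c_1=\lambda_{\min}((Q^*)^{-1})$, $c_2=\lambda_{\max}((Q^*)^{-1})$ and the appeal to Lemma \ref{lyapunov_thm} are fine, but until the reduction to \eqref{observer_q} is actually executed, the proposal is a correct outline with its load-bearing step missing.
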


\begin{proof}
Consider the candidate Lyapunov function $V_t=e_t'P_oe_t$, where $P_o$ satisfies following equation.
\begin{align}
\label{observer_Riccati_uncertain_transform}
P_o =  R_o + E_{\xi_t}\left[ \mathcal{A}_o(\xi_t)'P_o\mathcal{A}_o(\xi_t)
+\left(\mathcal{A}_o(\xi_t)'P_oB - C'\right)(\Sigma_2-B'P_oB)^{-1}\left(B'P_o\mathcal{A}_o(\xi_t)-C\right)\right],
\end{align}
where, $\mathcal{A}_o(\xi_t) := A - \xi_tLC$ and $R_o >0$. Equation \ref{observer_Riccati_uncertain_transform} can be viewed as a stochastic variant of positive real Lemma Riccati equation \cite{haddad_bernstein_DT}.
Using (\ref{observer_Riccati_uncertain_transform}) and writing $\Delta V_t := E_{\xi_t}\left[V_{t+1}\right] - V_t$ we get
\begin{align}
\label{delta_V1}
\Delta V_t = &-e_t'R_oe_t - E_{\xi}\left[e_t' \left(\mathcal{A}_o(\xi)'P_oB - C'\right)\left(\Sigma_2 - B'P_oB\right)^{-1} \left(B'P_o\mathcal{A}_o(\xi) - C\right)e_t\right]\nonumber\\
 &-E_{\xi}\left[e_t'\mathcal{A}_o(\xi)'P_oB\psi(t,w_t)\right]-E_{\xi}\left[\psi(t,w_t)'B'P_o\mathcal{A}_o(\xi)e_t\right]+ E_{\xi}\left[\psi(t,w_t)'B'P_oB\psi(t,w_t)\right].
\end{align}
Add and subtract $\psi(t,w_t)'\Sigma_2\psi(t,w_t)$ and $2\psi(t,w_t)'w_t$ to (\ref{delta_V1}) to get
\begin{align}
\label{delta_V2}
\Delta V_t = &-e_t'R_oe_t - E_{\xi}\left[e_t' \left(\mathcal{A}_o(\xi)'P_oB - C'\right)\left(\Sigma_2 - B'P_oB\right)^{-1} \left(B'P_o\mathcal{A}_o(\xi) - C\right)e_t\right]\nonumber\\
 &-E_{\xi}\left[e_t'\left(\mathcal{A}_o(\xi)'P_oB - C'\right)\psi(t,w_t)\right]-E_{\xi}\left[\psi(t,w_t)'\left(C - B'P_o\mathcal{A}_o(\xi)\right)e_t\right]\nonumber\\
 &- E_{\xi}\left[\psi(t,w_t)'\left(\Sigma_2 - B'P_oB\right)\psi(t,w_t)\right] - 2\psi(t,w_t)'\left(w_t - D_2\psi(t,w_t)\right).
\end{align}
Using the algebraic manipulation given in \cite{haddad_bernstein_DT}, we express the above expression as a combination of negative definite functions as following,
\begin{align}
\Delta V_t = -e_t'R_oe_t - E_{\xi}\left[\nu(\xi)'\nu(\xi)\right]
 - 2\psi(t,w_t)'\left(w_t - D_2\psi(t,w_t)\right),
\end{align}
where, $\nu(\xi) = \left(\Sigma_2 - B'P_oB\right)^{-1/2}(B'P_o\mathcal{A}_o(\xi) - C)e_t + (\Sigma_2 - B'P_oB)^{1/2}\psi(t,w_t)$. Thus using the fact that $\psi(t,w_t)$ satisfies the sector condition we get
\begin{align*}
E_{\xi_t}\left[V_{t+1}\right] - V_t < -e_t'R_oe_t.
\end{align*}
Hence the asymptotic observer with erasure in sensor measurement is mean square exponentially stable. From (\cite{lancaster} Proposition 12.1.1) and the transformation $\mathcal{A}_{2o}:= \mathcal{A}_o - B\Sigma_2^{-1}C$, the equation (\ref{observer_Riccati_uncertain_transform}) can be written as
\begin{align*}
P_o > E_{\xi}\left[ \mathcal{A}_{2o}(\xi)'P_o\mathcal{A}_{2o}(\xi)+\mathcal{A}_{2o}(\xi)'P_oB(\Sigma_2-B'P_oB)^{-1}B'P_o\mathcal{A}_{2o}(\xi)\right]+C'\Sigma_2^{-1}C.
\end{align*}
This may then be written as
\begin{align}
\label{primal_ineq}
I > E_{\xi}\left[\tilde{\mathcal{A}}_2(\xi)'\tilde{\mathcal{A}}_2(\xi)\right],
\end{align}
where, $\tilde{\mathcal{A}}_2(\xi) = (P_o^{-1} - B\Sigma_2^{-1}B')^{-\frac{1}{2}}\left(\mathcal{A}_{2o}(\xi) - B\Sigma_2^{-1}C\right)(P_o - C'\Sigma_2^{-1}C)^{-\frac{1}{2}}$. We know that (\ref{primal_ineq}) is true if and only if
\begin{align}
\label{dual_ineq}
I > E_{\xi}\left[\tilde{\mathcal{A}}_2(\xi)\tilde{\mathcal{A}}_2(\xi)'\right].
\end{align}
Now defining $Q_o:= P_o^{-1}$ and expanding (\ref{dual_ineq}) we get
\begin{align}
\label{dual_Riccati_observer}
Q_o - B\Sigma_2^{-1}B' > E_{\xi}\left[\mathcal{A}_{2o}(\xi)(Q_o^{-1} - C'\Sigma_2^{-1}C)^{-1}\mathcal{A}_{2o}(\xi)'\right].
\end{align}
Minimizing trace of the right hand side (RHS) in above equation, with respect to $L$ we get
\begin{align*}
L = A_2(Q_o^{-1} - C'\Sigma_2^{-1}C)^{-1}C'\left(C(Q_o^{-1} - C'\Sigma^{-1}C)^{-1}C'\right)^{-1},
\end{align*}
where, $A_2 = A - B\Sigma_2^{-1}C$. Simple matrix computation gives us $ A_2(Q_o^{-1} - C'\Sigma_2^{-1}C)^{-1}C'=A_2Q_oC'(\Sigma_2 - CQ_oC')^{-1}\Sigma_2$ and $C(Q_o^{-1} - C'\Sigma_2^{-1}C)^{-1}C' =  CQ_oC'(\Sigma_2 - CQ_oC')^{-1}\Sigma_2$. Applying these matrix simplifications to the gain $L$ we get
\[ L = A_2Q_oC'(CQ_oC')^{-1}. \]
Substituting this structure of $L$ in (\ref{dual_Riccati_observer}) we get
\begin{align}
\label{optimal_Riccati}
Q_o > A_2Q_oA_2' - A_2Q_oC'\left[q(CQ_oC')^{-1}- (1-q)(\Sigma_2 - CQ_oC')^{-1}\right]CQ_oA_2'+ B\Sigma^{-1}B'.
\end{align}
We now wish to design $Q_o = Q^*$ that will satisfy the above equation. Now suppose $Q^*$ satisfies the minimum covariance like Riccati equation given by
\begin{align*}
Q^* = A_2Q^*A_2' - A_2Q^*C'(\Sigma_2 + CQ^*C')^{-1}CQ^*A_2' + B\Sigma_2^{-1}B',
\end{align*}
then $Q^*$ satisfies (\ref{optimal_Riccati}) if
\begin{align*}
q(CQ^*C')^{-1} - (1-q)(\Sigma_2 - CQ^*C')^{-1} > (\Sigma_2 + CQ^*C')^{-1}.
\end{align*}
Thus the observer error dynamics (\ref{error_sys2}) is exponentially mean square stable if
\begin{align}
\Sigma_2 - CQ^*C' > \left(1-q\right)\left(\Sigma_2 + CQ^*C'\right).
\end{align}
Substituting ${\cal Q}_{\xi} = \frac{q}{1-q}$ proves the result.
\end{proof}
Theorem \ref{controller_condition_thm}, provide results for the design of full state feedback controller, $u_t=K x_t$ for system (\ref{nonlinear_sys}) in Lure form.
\begin{theorem}
\label{controller_condition_thm}
Consider the system (\ref{nonlinear_sys}) in Lure form satisfying Assumptions \ref{assume_nonlinearity}, \ref{assume_uncertainty}, and \ref{TCP_protocol}. Let $u_t=Kx_t$ be the linear full state feedback controller, then the state dynamics is mean square exponentially stable if
\begin{align}
\left(\Sigma_1 -B'P^*B\right)> \left(\frac{1}{1+{\mathcal Q}}\right)\left(\Sigma_1 + B'P^*B\right),
\end{align}
 where, $\Sigma_1 = D_1+D_1' > 0$. The matrix $P^* = (P^*)' > 0$ satisfies the Riccati equation
\begin{align*}
P^* = A_1'P^*A_1 - A_1'P^*B(\Sigma_1 + B'P^*B)^{-1}B'P^*A_1 + C'\Sigma_1^{-1}C,
\end{align*}
where, $A_1 := A-B\Sigma_1^{-1}C$. Furthermore, the controller gain is given by
\begin{align*}
K = -(B'P^*B)^{-1}B'P^*A_1.
\end{align*}
\end{theorem}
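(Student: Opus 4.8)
The plan is to mirror, step for step, the argument just used for the observer in Theorem \ref{observer_condition_thm}, exploiting the fact that the closed loop state feedback problem is the primal (transpose dual) of the observer problem. Substituting the full state feedback $u_t = Kx_t$ into \eqref{nonlinear_sys} together with $\tilde u_t = \gamma_t u_t$ gives the closed loop dynamics $x_{t+1} = \mathcal{A}_c(\gamma_t)x_t - B\phi(y_t)$, $y_t = Cx_t$, with $\mathcal{A}_c(\gamma_t) := A + \gamma_t BK$, where $\phi$ obeys the sector condition \eqref{sector_control}, i.e. $\phi(y)'(y - D_1\phi(y)) > 0$. First I would introduce the candidate Lyapunov function $V_t = x_t'P_c x_t$ with $P_c = P_c' > 0$ solving the stochastic PRL Riccati equation
\begin{align*}
P_c = R_c + E_{\gamma}\left[\mathcal{A}_c(\gamma)'P_c\mathcal{A}_c(\gamma) + \left(\mathcal{A}_c(\gamma)'P_cB - C'\right)\left(\Sigma_1 - B'P_cB\right)^{-1}\left(B'P_c\mathcal{A}_c(\gamma) - C\right)\right],
\end{align*}
with $R_c > 0$, the exact counterpart of \eqref{observer_Riccati_uncertain_transform}.

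The first block of the argument establishes mean square exponential stability. Forming $\Delta V_t = E_{\gamma}[V_{t+1}] - V_t$, substituting the Riccati equation for $P_c$, and then adding and subtracting $\phi(y_t)'\Sigma_1\phi(y_t)$ and $2\phi(y_t)'y_t$, the Haddad--Bernstein completion of squares recasts $\Delta V_t$ as
\begin{align*}
\Delta V_t = -x_t'R_c x_t - E_{\gamma}\left[\nu_c(\gamma)'\nu_c(\gamma)\right] - 2\phi(y_t)'\left(y_t - D_1\phi(y_t)\right),
\end{align*}
where $\nu_c(\gamma) = (\Sigma_1 - B'P_cB)^{-1/2}(B'P_c\mathcal{A}_c(\gamma) - C)x_t + (\Sigma_1 - B'P_cB)^{1/2}\phi(y_t)$. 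Since the sector condition \eqref{sector_control} makes the last term strictly positive, $\Delta V_t < -x_t'R_c x_t$, and stability follows from Lemma \ref{lyapunov_thm}.

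The second block synthesizes $K$ together with the stated Riccati equation. Because $R_c > 0$ the Riccati equation yields a strict inequality; applying (\cite{lancaster} Proposition 12.1.1) with the transformation $\mathcal{A}_{1c}(\gamma) := \mathcal{A}_c(\gamma) - B\Sigma_1^{-1}C = A_1 + \gamma BK$ (where $A_1 = A - B\Sigma_1^{-1}C$) and the matrix inversion lemma $P_c + P_cB(\Sigma_1 - B'P_cB)^{-1}B'P_c = (P_c^{-1} - B\Sigma_1^{-1}B')^{-1}$ converts it to
\begin{align*}
P_c - C'\Sigma_1^{-1}C > E_{\gamma}\left[\mathcal{A}_{1c}(\gamma)'(P_c^{-1} - B\Sigma_1^{-1}B')^{-1}\mathcal{A}_{1c}(\gamma)\right].
\end{align*}
The crucial structural observation is that, unlike the observer case where the gain sits to the right of $C$ and forces a passage to the inverse $Q_o = P_o^{-1}$, here $K$ enters through $BK$ on the left of $\mathcal{A}_c$, so the minimization can be carried out directly in the $P_c$ coordinates. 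Expanding the expectation over $\gamma \in \{0,1\}$ with $\textrm{Prob}\{\gamma = 1\} = p$ and minimizing the right hand side over $K$ gives $K = -(B'P_cB)^{-1}B'P_cA_1$ after the Woodbury simplification $B'(P_c^{-1} - B\Sigma_1^{-1}B')^{-1} = \Sigma_1(\Sigma_1 - B'P_cB)^{-1}B'P_c$.

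Substituting this optimal gain back and collapsing the terms by the same identity reduces the inequality to the controller analog of \eqref{optimal_Riccati},
\begin{align*}
P_c > A_1'P_cA_1 - A_1'P_cB\left[p(B'P_cB)^{-1} - (1-p)(\Sigma_1 - B'P_cB)^{-1}\right]B'P_cA_1 + C'\Sigma_1^{-1}C.
\end{align*}
Setting $P_c = P^*$ equal to the solution of the control Riccati equation in the statement, this holds provided $p(B'P^*B)^{-1} - (1-p)(\Sigma_1 - B'P^*B)^{-1} > (\Sigma_1 + B'P^*B)^{-1}$, which I would simplify, exactly as the $q$ version was handled at the close of Theorem \ref{observer_condition_thm}, to $\Sigma_1 - B'P^*B > (1-p)(\Sigma_1 + B'P^*B)$, and finally, using $1-p = 1/(1+{\mathcal Q})$, to the claimed condition. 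I expect the main obstacle to be this last matrix inequality equivalence: the cancellation that makes it work is transparent in the scalar case, but in the matrix setting it must be justified using $B'P^*B > 0$ and the deterministic positivity $\Sigma_1 - B'P^*B > 0$ via congruence, rather than naive cross multiplication.
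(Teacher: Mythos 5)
Your proposal is correct and follows essentially the same route as the paper: the stochastic PRL Riccati equation with $\mathcal{A}_c(\gamma_t)=A+\gamma_t BK$, the Haddad--Bernstein completion of squares against the sector condition \eqref{sector_control}, the Lancaster-type transformation to $\mathcal{A}_{1c}$, trace minimization over $K$, and comparison of $P_c=P^*$ with the covariance-like Riccati equation --- indeed you supply details (the explicit $\Delta V_t$ decomposition, the Woodbury identities, and the structural remark that no passage to $Q=P_c^{-1}$ is needed since $BK$ enters on the left) that the paper compresses into ``following the proof of Theorem \ref{observer_condition_thm}.'' The final matrix step you flag is sound: the hypothesis $\Sigma_1-B'P^*B>(1-p)(\Sigma_1+B'P^*B)$ is equivalent to $p\Sigma_1>(2-p)B'P^*B$, and two applications of the antitonicity of matrix inversion give $(1-p)(\Sigma_1-B'P^*B)^{-1}<(\Sigma_1+B'P^*B)^{-1}$ and $p(B'P^*B)^{-1}>2(\Sigma_1+B'P^*B)^{-1}$, which together yield the required inequality without any commutativity assumption.
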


\begin{proof}
Consider the candidate Lyapunov function $V_t=x_t'P_cx_t$. Then, following the proof of Theorem \ref{observer_condition_thm}, the state feedback controller with erasure in actuator is mean square exponentially stable if $P_c$ satisfies
\begin{align}
\label{controller_Riccati_uncertain_transform}
P_c =  R_c + E_{\gamma_t}\left[ \mathcal{A}_c(\gamma_t)'P_c\mathcal{A}_c(\gamma_t)
+\left(\mathcal{A}_c(\gamma_t)'P_cB - C'\right)(\Sigma_1-B'P_cB)^{-1}\left(B'P_c\mathcal{A}_c(\gamma_t)-C\right)\right],
\end{align}
where, $\mathcal{A}_c(\gamma_t) := A + \gamma_tBK$ and $R_c >0$. From \cite{lancaster} and the transformation $\mathcal{A}_{1c}:= \mathcal{A}_c - B\Sigma_1^{-1}C$, the equation (\ref{observer_Riccati_uncertain_transform}) can be written as
\begin{align}%
\label{controller_Riccati_uncertain}
P_c > E_{\gamma}\left[ \mathcal{A}_{1c}(\gamma)'P_c\mathcal{A}_{1c}(\gamma)+\mathcal{A}_{1c}(\gamma)'P_cB(\Sigma_1-B'P_cB)^{-1}B'P_c\mathcal{A}_{1c}(\gamma)\right]+C'\Sigma_1^{-1}C.
\end{align}
Minimizing trace of RHS in above equation, with respect to $K$ we get
\begin{align*}
K = -\left(B'(P_c^{-1} - B\Sigma_1^{-1}B')^{-1}B\right)^{-1}B'(P_c^{-1} - B\Sigma_1^{-1}B')^{-1}A_1,
\end{align*}
where, $A_1 = A - B\Sigma_1^{-1}C$. Simple matrix computation gives us $ A_1(P_c^{-1} - B\Sigma_1^{-1}B')^{-1}B=A_1P_cB(\Sigma_1 - B'P_cB)^{-1}\Sigma_1$ and $B'(P_c^{-1} - B\Sigma_1^{-1}B')^{-1}B =  B'P_cB(\Sigma_1 - B'P_cB)^{-1}\Sigma_1$. Applying these matrix simplifications to the gain $K$ we get
\[ K = -(B'P_cB)^{-1}B'P_cA_1. \]
Substituting this structure of $K$ in (\ref{controller_Riccati_uncertain}), we get
\begin{align}
\label{optimal_Riccati_control}
P_c> A_1'P_cA_1 - A_1'P_cB\left[p(B'P_cB)^{-1}- (1-p)(\Sigma_1 - B'P_cB)^{-1}\right]B'P_cA_1+ C'\Sigma_1^{-1}C.
\end{align}
We now wish to design $P_c = P^*$ that will satisfy the above equation. Now suppose $P^*$ satisfies the minimum covariance like Riccati equation given by
\begin{align*}
P^* = A_1P^*A_1' - A_1P^*B(\Sigma_1 + B'P^*B)^{-1}B'P^*A_1 + C'\Sigma_1^{-1}C,
\end{align*}
then $P^*$ satisfies (\ref{optimal_Riccati_control}) if
\begin{align*}
p(B'P^*B)^{-1} - (1-p)(\Sigma_1 - B'P^*B)^{-1} > (\Sigma_1 + B'P^*B)^{-1}.
\end{align*}
Thus the controller dynamics (\ref{nonlinear_sys}) is exponentially mean square stable if
\begin{align}
\Sigma_1 - B'P^*B > \left(1 - p\right)\left(\Sigma_1 + B'P^*B\right).
\end{align}
Substituting ${\cal Q}_{\gamma} = \frac{p}{1-p}$ proves the result.
\end{proof}
Our next theorem in on separation principle. We prove that the problem of observer-based controller design for nonlinear systems with uncertainties at the input and output channels can be decomposed into two separate problems of designing a full state feedback controller and a observer design problem. We prove the theorem on separation principle for more general nonlinear systems with channel uncertainties at the input and output.
\begin{eqnarray}
x_{t+1} &=& f(x_t, \gamma_t,u_t),\;\;\;
y_t =h(x_t),\;\;\;\tilde y_t= \xi_t y_t\nonumber\\
\hat x_{t+1} &=& g(\hat x_t,  \gamma_t,u_t, \xi_t,h(x_t))\label{coupled_sys},
\end{eqnarray}
where, $x_t \in X \subseteq \mathbb{R}^N$, $y_t \in Y \subseteq \mathbb{R}^M$ and $u_t \in U \subseteq \mathbb{R}^M$ are the state, output and input respectively. $\hat x_t\in \mathbb{R}^N$ is the observer state. $\gamma_t\in  W \subseteq \mathbb{R}$ and $\xi_t\in \{0,1\}$ are  assumed to be i.i.d random variables modeling the uncertainty at the input and output channels respectively. We notice that the observer dynamics $g$ in (\ref{coupled_sys}) is assumed to be the function of function of input channel erasure state $\gamma_t$. This is because of the assumed acknowledgement structure in the form of TCP (Assumption \ref{TCP_protocol}). We have already provided Lyapunov based conditions for mean square stability of the observer error dynamics and the stability under full state feedback for the system in Lure form (Theorem \ref{controller_condition_thm} and \ref{observer_condition_thm}). We will now prove the separation theorem for a general nonlinear system with uncertainty in control and observation. We will make the following assumptions regarding the nonlinear system and its Lyapunov based control and observation. It should be noted that these assumptions (Assumptions \ref{assume_Lyapunov} and \ref{assume_bounds}) are satisfied by the Lure system.
 We now make following assumption on system (\ref{coupled_sys}).
\begin{assumption}\label{assume_Lyapunov} For system (\ref{coupled_sys}), let $u_t=k(x_t)$ be the full state feedback control input, we assume that there exist Lyapunov functions $V_1(x_t)$ and $V_2(e_t)$, with $e_t:=x_t-\hat x_t$, and positive constants $\bar c_1$, $\bar c_2$, $\bar c_3$, $d_1$, $d_2$ and $d_3$ such that following conditions are satisfied.
\begin{align}
\bar c_1||x_t||^2 \leq V_1(x_t) \leq \bar c_2||x_t||^2,\;\;\;
E_{\gamma_t}[V_1(x_{t+1})] - V_1(x_t)\leq -\bar c_3||x_t||^2 \label{v1_b}, \\
d_1||e_t||^2 \leq V_2(e_t) \leq d_2||e_t||^2,\;\;\;
E_{\gamma_t,\xi_t}[V_2( e_{t+1})] - V_2(e_t)\leq -d_3||e_t||^2\label{v2_b}.
\end{align}
\end{assumption}

\begin{assumption}\label{assume_bounds}
 We assume that there exists positive constants $L_3$, $L_4$, $L_5$, $L_6$, and $L_7$ such that $||\frac{\partial f}{\partial x}|| \leq L_3$, $||\frac{\partial f}{\partial u}|| \leq L_4$,  $||k(x_t) - k(\hat x_t)|| \leq L_5||e_t||$, $||\frac{\partial V_1}{\partial x}(x)|| \leq L_6||x||$, and $||\frac{\partial V_2}{\partial e}(e)|| \leq L_7||e||$ .
\end{assumption}
\begin{remark} The results on separation principle for deterministic nonlinear systems exist \cite{vidyasagar_separation}. Our results in Theorem \ref{separation_thm} extends these results for nonlinear systems with input and output channel uncertainty. The results in Theorem \ref{separation_thm} can be considered as one of the contribution of this paper. Furthermore, separation theorem is proved for more general uncertainty and this will allow us to use the results of Theorem \ref{separation_thm} in the proof of second main result of this paper (Theorem \ref{separation_thm}).
\end{remark}
\begin{remark}
The existence of Lyapunov functions satisfying conditions (\ref{v1_b}) and (\ref{v2_b}) in Assumption \ref{assume_Lyapunov} combined with the results from Lyapunov based Theorem \ref{lyapunov_thm}
 ensure that state dynamics and observer error dynamics for system (\ref{coupled_sys}) is mean square exponentially stable. In particular, it follows that state dynamics, with full state feedback, and observer  error dynamics for (\ref{coupled_sys}) satisfies following stability conditions.
\[ E_{\gamma_0^t}[||x_{t+1}||^2] \leq M_1\bar \beta_1^{t+1}||x_0||^2,\quad \quad  E_{\gamma_0^t,\xi_0^t}[||e_{t+1}||^2] \leq M_2\bar \beta_2^{t+1}||e_0||^2,\]
for some positive constants $M_1<\infty$, $M_2<\infty$, $\bar \beta_1 := 1- \frac{\bar c_3}{\bar c_2}<1$, and $\bar \beta_2:= 1- \frac{d_3}{d_2}<1$.
\end{remark}

Theorem \ref{separation_thm} is our main result on principle of separation.
\begin{theorem}
\label{separation_thm} Consider the observer-based controller design problem for system (\ref{coupled_sys}) satisfying Assumptions \ref{assume_Lyapunov} and \ref{assume_bounds}. Then the state dynamics of system (\ref{coupled_sys}) using estimated state, $\hat x_t$, for  the feedback control input (i.e., $u_t=k(\hat x_t)$) is mean square exponentially stable.
\end{theorem}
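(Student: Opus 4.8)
The plan is to prove mean square exponential stability of the augmented state $z_t := (x_t, e_t)$ by constructing a composite Lyapunov function $W(x_t, e_t) := V_1(x_t) + \lambda V_2(e_t)$ for a suitably large constant $\lambda > 0$, and then invoking Lemma \ref{lyapunov_thm} applied to $z_t$. The observation that drives the whole argument is that replacing the true-state feedback $u_t = k(x_t)$ by the certainty-equivalent feedback $u_t = k(\hat x_t)$ perturbs the state update $f$ only through its control argument, and the size of that perturbation is controlled by the estimation error $e_t$, whose dynamics is (by Assumption \ref{assume_Lyapunov}) mean square exponentially decaying in its own right. Separation will then follow because this error-driven perturbation enters $V_1$ only as a lower-order term that the nominal decrease can dominate.

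First I would quantify the perturbation. Writing the nominal update $x_{t+1}^{\mathrm{nom}} := f(x_t,\gamma_t,k(x_t))$ and the actual update $x_{t+1} = f(x_t,\gamma_t,k(\hat x_t))$, the difference $\delta_t := x_{t+1} - x_{t+1}^{\mathrm{nom}}$ satisfies $\|\delta_t\| \le L_4 L_5 \|e_t\|$ uniformly in $\gamma_t$, using $\|\partial f/\partial u\| \le L_4$ and $\|k(x_t) - k(\hat x_t)\| \le L_5\|e_t\|$ from Assumption \ref{assume_bounds} together with the mean value theorem. The same bounds (with $k(0)=0$ and the equilibrium property $f(0,\gamma_t,0)=0$) yield the linear growth bound $\|x_{t+1}^{\mathrm{nom}}\| \le (L_3 + L_4 L_5)\|x_t\|$. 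Next I would estimate the one-step increment of $V_1$ along the actual trajectory by the fundamental theorem of calculus and $\|\partial V_1/\partial x(x)\| \le L_6\|x\|$, obtaining
\[
|V_1(x_{t+1}) - V_1(x_{t+1}^{\mathrm{nom}})| \le L_6\bigl(\|x_{t+1}^{\mathrm{nom}}\| + \|\delta_t\|\bigr)\|\delta_t\|.
\]
Combining this with the nominal decrease $E_{\gamma_t}[V_1(x_{t+1}^{\mathrm{nom}})] - V_1(x_t) \le -\bar c_3\|x_t\|^2$ from \eqref{v1_b} and the two bounds above gives $E_{\gamma_t}[V_1(x_{t+1})] - V_1(x_t) \le -\bar c_3\|x_t\|^2 + a_1\|x_t\|\|e_t\| + a_2\|e_t\|^2$ for explicit constants $a_1,a_2$ built from $L_3,L_4,L_5,L_6$. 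A Young's inequality $a_1\|x_t\|\|e_t\| \le \tfrac{\bar c_3}{2}\|x_t\|^2 + \tfrac{a_1^2}{2\bar c_3}\|e_t\|^2$ absorbs the cross term into half of the nominal decrease, leaving $E_{\gamma_t}[V_1(x_{t+1})] - V_1(x_t) \le -\tfrac{\bar c_3}{2}\|x_t\|^2 + a_3\|e_t\|^2$.

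Finally I would close the loop with the error Lyapunov function. Because the observer error dynamics of \eqref{coupled_sys} does not depend on the applied control input (the control enters $f$ and $g$ identically and cancels in $e_{t+1}=x_{t+1}-\hat x_{t+1}$, exactly as in the Lure error dynamics \eqref{error_sys2}), the decrease condition \eqref{v2_b}, namely $E_{\gamma_t,\xi_t}[V_2(e_{t+1})] - V_2(e_t) \le -d_3\|e_t\|^2$, continues to hold under $u_t = k(\hat x_t)$. Choosing $\lambda$ large enough that $\mu := \lambda d_3 - a_3 > 0$, the composite function $W = V_1 + \lambda V_2$ satisfies $E[W(x_{t+1},e_{t+1})] - W(x_t,e_t) \le -\tfrac{\bar c_3}{2}\|x_t\|^2 - \mu\|e_t\|^2 \le -c^*(\|x_t\|^2+\|e_t\|^2)$ with $c^* = \min(\bar c_3/2,\mu)$, while the quadratic bounds in \eqref{v1_b} and \eqref{v2_b} give matching quadratic upper and lower bounds on $W$ in terms of $\|x_t\|^2+\|e_t\|^2$. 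Lemma \ref{lyapunov_thm}, applied to the augmented state $z_t=(x_t,e_t)$, then yields $E[\|z_{t+1}\|^2]\le M\beta^t\|z_0\|^2$, from which the claimed mean square exponential stability of $x_t$ under $u_t=k(\hat x_t)$ follows. The main obstacle I expect is conceptual rather than computational: justifying that the error-dynamics decrease in \eqref{v2_b} survives the switch from $k(x_t)$ to $k(\hat x_t)$, which rests on the control-independence of the error dynamics and is the crux of the separation property; the remaining cross-term bookkeeping via Young's inequality is routine.
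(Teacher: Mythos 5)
Your proof is correct, but it takes a genuinely different route from the paper's. The paper never forms a composite Lyapunov function: it perturbs the one-step $V_1$ inequality via the mean value theorem, bounds the resulting cross term $E\left[\|f(x_t,\gamma_t,k(x_t))\|^2\right]^{1/2}\|e_t\|$ through $V_1$ itself (using $\bar c_1$ and the nominal decrease rather than a growth bound on $f$), handles it by a case split on whether $(\bar\beta_1 V_1(x_t))^{1/2}$ dominates $\|e_t\|$ to reach $E\left[V_1(x_{t+1})\right]\leq \beta_3 V_1(x_t)+c_7\|e_t\|^2$, and then closes the argument as a cascade: it unrolls this recursion and sums the convolution $\sum_i \bar\beta_2^{\,i}\beta_3^{\,t-i}$ against the separately established exponential decay of $E\left[\|e_t\|^2\right]$. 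You instead merge both subsystems into $W=V_1+\lambda V_2$, absorb the cross term by Young's inequality, and invoke Lemma \ref{lyapunov_thm} once on the augmented state. Your route buys a cleaner single-step contraction with no geometric-sum bookkeeping (it even sidesteps the degenerate case $\beta_3=\bar\beta_2$, where the paper's constant $c_8$ blows up and needs the standard fix); its price is a slightly heavier draw on Assumption \ref{assume_bounds}, since your bound $\|f(x,\gamma,k(x))\|\leq(L_3+L_4L_5)\|x\|$ needs $L_3$ together with $k(0)=0$ and $f(0,\gamma,0)=0$ --- all available or implicit in the equilibrium setting, so this is harmless. Two technical points deserve a line in a polished write-up: Lemma \ref{lyapunov_thm} is stated for quadratic $V_S(x)=x'Px$, whereas your $W$ is merely sandwiched by quadratics; the lemma's proof uses only the sandwich bounds, so the application is valid but should be remarked. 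And the issue you correctly flag as the crux --- that \eqref{v2_b} survives the switch from $k(x_t)$ to $k(\hat x_t)$ --- is genuinely an extra structural reading of Assumption \ref{assume_Lyapunov} for the general system \eqref{coupled_sys}, but the paper relies on exactly the same reading when it inserts $E_{\xi_0^{i-1}}\left[\|x_i-\hat x_i\|^2\right]\leq\bar\beta_2^{\,i}\|x_0-\hat x_0\|^2$ along the actual closed-loop trajectories into its recursion, so you are on equal footing there.
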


\begin{proof}
For $u_t = k(\hat x_t)$ we obtain
\begin{align}
&E_{\gamma_t,\xi_t}\left[V_1(f(x_t,\gamma_t,k(\hat x_t)))\right] - V_1(x_t)\nonumber\\
&=E_{\gamma_t}\left[V_1(f(x_t,\gamma_t,k(x_t)))\right] - V_1(x_t) + E_{\gamma_t,\xi_t}\left[V_1(f(x_t,\gamma_t,k(\hat x_t)))\right] - E_{\gamma_t}\left[V_1(f(x_t,\gamma_t,k(x_t)))\right]\nonumber\\
&=E_{\gamma_t}\left[V_1(f(x_t,\gamma_t,k(x_t)))\right] - V_1(x_t) + E_{\gamma_t,\xi_t}\left[V_1(f(x_t,\gamma_t,k(\hat x_t))) - V_1(f(x_t,\gamma_t,k(x_t)))\right] \label{sep_eq_1}
\end{align}
Applying Mean Value Theorem and equation (\ref{v1_b}) to (\ref{sep_eq_1}) and using Assumption \ref{assume_bounds} we obtain
\begin{align}
E_{\gamma_t,\xi_t}\left[V_1(f(x_t,\gamma_t,k(\hat x_t)))\right] - V_1(x_t)
&\leq -c_3||x_t||^2\nonumber\\ 
&\;+ E_{\gamma_t,\xi_t}\left[\bigg\|\frac{\partial V_1}{\partial f}(f(x_t,\gamma_t,r))\bigg\| \bigg\|\frac{\partial f}{\partial k}(x_t,\gamma_t,r)\bigg\| \big\|k(\hat x_t) - k(x_t)\big\|\right]\nonumber\\
&\leq -c_3||x_t||^2 + E_{\gamma_t,\xi_t}\left[L_4L_5L_6\big\|f(x_t,\gamma_t,r)\big\| ||\hat{x}_t - x_t||\right],
\end{align}
where, $r = sk(x_t) + (1-s)k(\hat x_t)$ for some $0 \leq s \leq 1$. Thus, writing $c_4 := L_4L_5L_6$, and $\beta_1 := 1 -\frac{c_3}{c_2}$,
\begin{align}
E_{\gamma_t,\xi_t}\left[V_1(f(x_t,\gamma_t,k(\hat x_t)))\right] &\leq \beta_1V_1(x_t) \nonumber\\
&\;+ c_4E_{\gamma_t,\xi_t}\left[||f(x_t,\gamma_t,r) - f(x_t,\gamma_t,k(x_t)) + f(x_t,\gamma_t,k(x_t))||\right]||x_t - \hat{x}_t||\nonumber\\
&\leq \bar{\beta}_1V_1(x_t) + c_4E_{\gamma_t,\xi_t}\left[||f(x_t,\gamma_t,r) - f(x_t,\gamma_t,k(x_t))||\right]||x_t - \hat{x}_t||\nonumber\\
&\quad + c_4E_{\gamma_t,\xi_t}\left[||f(x_t,\gamma_t,k(x_t))||\right]||x_t - \hat{x}_t||\nonumber\\
&\leq \bar{\beta}_1V_1(x_t) + c_4L_4L_5||x_t - \hat{x}_t||^2 + c_4E_{\gamma_t,\xi_t}\left[||f(x_t,\gamma_t,k(x_t))||^2\right]^{\frac{1}{2}}||x_t - \hat{x}_t||\nonumber\\
&\leq \bar{\beta}_1V_1(x_t) + c_4L_4L_5||x_t - \hat{x}_t||^2 + \frac{c_4}{\sqrt{\bar{c}_1}}\left(\bar{\beta}_1V_1(x_t)\right)^{\frac{1}{2}}||x_t - \hat{x}_t||.
\end{align}
Define $c_5 := c_4L_4L_5$ and $c_6 := \frac{c_4}{\sqrt{\bar{c}_1}}$. Since $\bar{\beta}_1 < 1$ there exists $\delta_1 > 0$ such that $\beta_3 := (1+\delta_1)\bar{\beta}_1 < 1$. Then if $(\bar{\beta}_1V_1(x_t))^{\frac{1}{2}} \geq \frac{c_6}{\delta_1}||x_t - \hat{x}_t||$, we obtain
\begin{align}
\label{condn1}
E_{\gamma_t,\xi_t}\left[V_1(f(x_t,\gamma_t,k(\hat x_t)))\right] &\leq \beta_3 V_1(x_t) + c_5||x_t - \hat{x}_t||^2.
\end{align}
In case $(\bar{\beta}_1V_1(x_t))^{\frac{1}{2}} \leq \frac{c_6}{\delta_1}||x_t - \hat{x}_t||$, we obtain
\begin{align}
\label{condn2}
E_{\gamma_t,\xi_t}\left[V_1(f(x_t,\gamma_t,k(\hat x_t)))\right] &\leq \bar{\beta}_1 V_1(x_t) + c_7||x_t - \hat{x}_t||^2,
\end{align}
where $c_7 := c_5 + \frac{c_6^2}{\delta_1}$. Thus taking the supremum over both conditions (\ref{condn1}) (\ref{condn2}) we obtain
\begin{align}\label{1stepeqn}
E_{\gamma_t,\xi_t}\left[V_1(f(x_t,\gamma_t,k(\hat x_t)))\right] &\leq \beta_3 V_1(x_t) + c_7||x_t - \hat{x}_t||^2.
\end{align}
Taking expectation over the uncertainty sequences $\{\xi_0,\ldots,\xi_t\}$, and $\{\gamma_0,\ldots,\gamma_t\}$, and recursively applying Eq. \eqref{1stepeqn} we obtain
\begin{align}
E_{\gamma_0^t,\xi_0^t}\left[V_1(f(x_t,\gamma_t,k(\hat{x}_t)))\right] &\leq \beta_3^{t+1} V_1(x_0) + c_7\sum_{i=1}^tE_{\xi_0^{i-1}}\left[||x_i - \hat{x}_i||^2\right]\beta_3^{t-i} + c_7 ||x_0 - \hat{x}_0||^2\beta_3^t\nonumber\\
&\leq \beta_3^{t+1} V_1(x_0) + c_7\sum_{i=0}^t\bar{\beta}_2^i\beta_3^{t-i} ||x_0 - \hat{x}_0||^2\nonumber\\
&\leq \beta_3^{t+1} V_1(x_0) + c_8\beta_4^{t+1}||x_0 - \hat{x}_0||^2,
\end{align}
where, $\beta_4 = \max(\beta_3,\beta_2)$, $c_8 := \frac{c_7\beta_4}{\beta_4 - \min(\beta_2,\beta_3)}$. Applying (\ref{v1_b}) we get 
\begin{align}
E_{\gamma_0^t,\xi_0^t}\left[||x_{t+1}||^2\right] &\leq \frac{\bar{c}_2}{\bar{c}_1}\beta_3^{t+1} ||x_0||^2 + \frac{c_8}{\bar{c}_1}\beta_4^{t+1}||x_0 - \hat{x}_0||^2,
\end{align}
We can now use the inequality $||x_0 - \hat{x}_0||^2 \leq 2(||x_0||^2 + ||\hat{x}_0||^2)$, along with (\ref{v2_b}) to get the desired result that the coupled system (\ref{coupled_sys}) is mean square exponentially stable.
\end{proof}
We are now ready to provide the proof of Theorem \ref{combined_sufficiency_p}.

\begin{proof}[Proof of Theorem \ref{combined_sufficiency_p}] It is easy to verify that the system in Lure form (\ref{nonlinear_sys}) satisfies the Assumption \ref{assume_bounds}. Furthermore, using the results from Theorems \ref{observer_condition_thm} and \ref{controller_condition_thm} it follows that there exist quadratic Lyapunov functions satisfying Assumption \ref{assume_Lyapunov}. Hence, results of Theorem \ref{separation_thm} are applicable to the observer based controller problem for (\ref{nonlinear_sys}) given controller and observer as designed in Theorems \ref{controller_condition_thm} and \ref{observer_condition_thm}. The proof then follows by combining the results of Theorems \ref{observer_condition_thm}, \ref{controller_condition_thm}, and \ref{separation_thm}.
\end{proof}
\subsection{Control of Lure system over uncertain channels}

In the main result of this paper we restricted the uncertain communication channels to packet-erasure channel with acknowledgement, modeled as Bernoulli random variables. We now provide results for state feedback control under general uncertainty model at the actuator. Consider system given by equation (\ref{nonlinear_sys}) satisfying Assumptions \ref{assume_nonlinearity} with full state observation and state feedback control. Suppose the uncertainty $\gamma_t$ is such that
\begin{assumption}\label{assumption_uncertain_general}
The random variables $\gamma_t$ are i.i.d. with statistics given by
\begin{align}\label{gam_stat}
E\left[\gamma_t\right] = \mu,\quad E\left[(\gamma_t - \mu)^2\right] = \sigma^2
\end{align}
\end{assumption}
We can then obtain the following theorem
\begin{theorem}
\label{controller_condition_thm_gen}
Consider the system (\ref{nonlinear_sys}) in Lure form with full state observation and state feedbck control. Suppose it satisfies Assumptions \ref{assume_nonlinearity} and \ref{assumption_uncertain_general}. Let $u_t=Kx_t$ be the linear full state feedback controller, then the state dynamics is mean square exponentially stable if
\begin{align}
\left(\Sigma_1 -B'P^*B\right)> \left(\frac{1}{1+{\mathcal Q}}\right)\left(\Sigma_1 + B'P^*B\right),
\end{align}
 where, $\Sigma_1 = D_1+D_1' > 0$. The matrix $P^* = (P^*)' > 0$ satisfies the Riccati equation
\begin{align*}
P^* = A_1'P^*A_1 - A_1'P^*B(\Sigma_1 + B'P^*B)^{-1}B'P^*A_1 + C'\Sigma_1^{-1}C,
\end{align*}
where, $A_1 := A-B\Sigma_1^{-1}C$. Furthermore, the controller gain is given by
\begin{align*}
K = -\frac{\mu}{\mu^2 + \sigma^2}(B'P^*B)^{-1}B'P^*A_1.
\end{align*}
\end{theorem}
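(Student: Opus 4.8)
The plan is to reproduce the argument of Theorem \ref{controller_condition_thm} almost verbatim, the only genuine change being that the Bernoulli moments $E[\gamma_t]=E[\gamma_t^2]=p$ are replaced throughout by the general moments $E[\gamma_t]=\mu$ and $E[\gamma_t^2]=\mu^2+\sigma^2$ of Assumption \ref{assumption_uncertain_general}. With full state feedback $u_t=Kx_t$ and actuation uncertainty $\gamma_t$, the closed loop is $x_{t+1}=\mathcal{A}_c(\gamma_t)x_t-B\phi(y_t)$ with $\mathcal{A}_c(\gamma_t)=A+\gamma_tBK$ and $y_t=Cx_t$. Taking the candidate $V_t=x_t'P_cx_t$ and running the algebra of Theorem \ref{observer_condition_thm} with $(e_t,\psi,w_t,\Sigma_2,\mathcal{A}_o(\xi))$ replaced by $(x_t,\phi,y_t,\Sigma_1,\mathcal{A}_c(\gamma))$ — expanding $\Delta V_t=E_{\gamma_t}[V_{t+1}]-V_t$ via the stochastic PRL Riccati equation (\ref{controller_Riccati_uncertain_transform}), adding and subtracting $\phi'\Sigma_1\phi$ and $2\phi'y_t$, and completing the square into $-E_{\gamma}[\nu(\gamma)'\nu(\gamma)]$ — the sector condition (\ref{sector_control}) gives $\Delta V_t<-x_t'R_cx_t\le-c_3\|x_t\|^2$, so Lemma \ref{lyapunov_thm} yields mean square exponential stability whenever a solution $P_c>0$ of (\ref{controller_Riccati_uncertain_transform}) exists. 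This Lyapunov step never uses the distribution of $\gamma_t$, so it transfers unchanged.

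The statistics enter only in the minimization over $K$, which I would isolate next. After the congruence $\mathcal{A}_{1c}(\gamma)=\mathcal{A}_c(\gamma)-B\Sigma_1^{-1}C=A_1+\gamma BK$ and the matrix inversion lemma identity $(P_c^{-1}-B\Sigma_1^{-1}B')^{-1}=P_c+P_cB(\Sigma_1-B'P_cB)^{-1}B'P_c=:S$, the feasibility inequality (\ref{controller_Riccati_uncertain}) reads $P_c>E_{\gamma}[\mathcal{A}_{1c}(\gamma)'S\mathcal{A}_{1c}(\gamma)]+C'\Sigma_1^{-1}C$. Expanding the expectation produces $A_1'SA_1+\mu(A_1'SBK+K'B'SA_1)+(\mu^2+\sigma^2)K'B'SBK$, where now the linear-in-$K$ term carries $E[\gamma]=\mu$ and the quadratic-in-$K$ term carries $E[\gamma^2]=\mu^2+\sigma^2$. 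Since the $K$-dependent part is $(\mu^2+\sigma^2)(K-K_0)'B'SB(K-K_0)$, completing the square simultaneously minimizes the trace and the Loewner order, giving $K=-\frac{\mu}{\mu^2+\sigma^2}(B'SB)^{-1}B'SA_1$; the same simplifications $B'SB=B'P_cB(\Sigma_1-B'P_cB)^{-1}\Sigma_1$ and $B'SA_1=\Sigma_1(\Sigma_1-B'P_cB)^{-1}B'P_cA_1$ collapse $(B'SB)^{-1}B'SA_1$ to $(B'P_cB)^{-1}B'P_cA_1$, yielding the claimed gain $K=-\frac{\mu}{\mu^2+\sigma^2}(B'P_cB)^{-1}B'P_cA_1$.

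Substituting this $K$ and using $(\Sigma_1-W)^{-1}\Sigma_1 W^{-1}=W^{-1}+(\Sigma_1-W)^{-1}$ with $W:=B'P_cB$, the minimized right-hand side reduces to the analog of (\ref{optimal_Riccati_control}) with $p,\,1-p$ replaced by $\tfrac{\mu^2}{\mu^2+\sigma^2},\,\tfrac{\sigma^2}{\mu^2+\sigma^2}$:
\begin{align*}
P_c> A_1'P_cA_1 - A_1'P_cB\Big[\tfrac{\mu^2}{\mu^2+\sigma^2}(B'P_cB)^{-1}- \tfrac{\sigma^2}{\mu^2+\sigma^2}(\Sigma_1 - B'P_cB)^{-1}\Big]B'P_cA_1+ C'\Sigma_1^{-1}C.
\end{align*}
Choosing $P_c=P^*$ as the solution of the minimum-covariance Riccati equation in the statement, this inequality holds provided $\tfrac{\mu^2}{\mu^2+\sigma^2}(B'P^*B)^{-1}-\tfrac{\sigma^2}{\mu^2+\sigma^2}(\Sigma_1-B'P^*B)^{-1}>(\Sigma_1+B'P^*B)^{-1}$, and the elementary reduction used at the end of Theorem \ref{controller_condition_thm} turns this into $\Sigma_1-B'P^*B>\tfrac{\sigma^2}{\mu^2+\sigma^2}(\Sigma_1+B'P^*B)$. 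Since $\tfrac{\sigma^2}{\mu^2+\sigma^2}=\tfrac{1}{1+{\mathcal Q}}$ with ${\mathcal Q}=\mu^2/\sigma^2$ the QoS of Definition \ref{qos}, this is exactly the stated condition.

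The main obstacle is simply the careful bookkeeping of the two distinct moments: in the Bernoulli proof $E[\gamma^2]=E[\gamma]=p$ collapses the distinction so the factor $\tfrac{\mu^2}{\mu^2+\sigma^2}$ never surfaces, whereas here $\mu$ and $\mu^2+\sigma^2$ must be carried separately through the completion of the square to land on both the correct gain and the correct coefficient. A secondary point needing care is that the reduction of the final inequality to $\Sigma_1-B'P^*B>\tfrac{1}{1+{\mathcal Q}}(\Sigma_1+B'P^*B)$, together with positive-definiteness of $\Sigma_1-B'P^*B$ and existence of a stabilizing $P^*>0$, must be justified at the matrix level; but this is identical to what is already invoked in Theorem \ref{controller_condition_thm}, so no new difficulty arises.
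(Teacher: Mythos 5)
Your proposal is correct and takes essentially the same route as the paper's own proof: a Lyapunov/stochastic-PRL argument reduced to the feasibility inequality, trace minimization over $K$ carrying the moments $E[\gamma_t]=\mu$ and $E[\gamma_t^2]=\mu^2+\sigma^2$ separately to obtain $K=-\frac{\mu}{\mu^2+\sigma^2}(B'P_cB)^{-1}B'P_cA_1$, substitution and comparison with the minimum-covariance Riccati solution $P^*$, and the final reduction to $\Sigma_1-B'P^*B>\frac{1}{1+{\mathcal Q}}\left(\Sigma_1+B'P^*B\right)$ with ${\mathcal Q}=\mu^2/\sigma^2$. The only difference is that you spell out the moment bookkeeping and matrix identities that the paper compresses into ``simplifying the expression as in Theorem \ref{controller_condition_thm}.''
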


\begin{proof}
Consider the candidate Lyapunov function $V_t=x_t'P_cx_t$.
Then, following the proof of Theorem \ref{controller_condition_thm}, the state feedback controller with erasure in actuator is mean square exponentially stable if $P_c$ satisfies
\begin{align}%
\label{controller_Riccati_uncertain_gen}
P_c > E_{\gamma}\left[ \mathcal{A}_{1c}(\gamma)'P_c\mathcal{A}_{1c}(\gamma)+\mathcal{A}_{1c}(\gamma)'P_cB(\Sigma_1-B'P_cB)^{-1}B'P_c\mathcal{A}_{1c}(\gamma)\right]+C'\Sigma_1^{-1}C.
\end{align}
where, $\mathcal{A}_c(\gamma_t) := A + \gamma_tBK - B\Sigma_1^{-1}C$. Minimizing trace of RHS in above equation, with respect to $K$ and simplyfying the expression as in Theorem \ref{controller_condition_thm} we obtain
\begin{align}
\label{gain_structure_gen}
K = -\frac{\mu}{\mu^2+\sigma^2}(B'P_cB)^{-1}B'P_cA_1,
\end{align}
where, $A_1 = A - B\Sigma_1^{-1}C$. 
Substituting structure of $K$ from \eqref{gain_structure_gen} in (\ref{controller_Riccati_uncertain_gen}), we obtain,
\begin{align}
\label{optimal_Riccati_control_gen}
P_c> A_1'P_cA_1 - A_1'P_cB\left[\frac{\mu^2}{\mu^2+\sigma^2}(B'P_cB)^{-1}- \frac{\sigma^2}{\mu^2+\sigma^2}(\Sigma_1 - B'P_cB)^{-1}\right]B'P_cA_1+ C'\Sigma_1^{-1}C.
\end{align}
We now wish to give design a $P^*$ that will satisfy the above equation. Now suppose $P^*$ satisfies the minimum covariance like Riccati equation given by
\begin{align*}
P^* = A_1P^*A_1' - A_1P^*B(\Sigma_1 + B'P^*B)^{-1}B'P^*A_1 + C'\Sigma_1^{-1}C,
\end{align*}
then $P^*$ satisfies (\ref{optimal_Riccati_control_gen}) if
\begin{align*}
\frac{\mu^2}{\mu^2+\sigma^2}(B'P^*B)^{-1} - \frac{\sigma^2}{\mu^2+\sigma^2}(\Sigma_1 - B'P^*B)^{-1} > (\Sigma_1 + B'P^*B)^{-1}.
\end{align*}
Thus the state feedback controlled dynamics (\ref{nonlinear_sys}) with general uncertainty in actuation is exponentially mean square stable if
\begin{align}
\Sigma_1 - B'P^*B > \left(\frac{\sigma^2}{\mu^2 + \sigma^2}\right)\left(\Sigma_1 + B'P^*B\right).
\end{align}
Substituting ${\cal Q}_{\gamma} = \frac{\mu^2}{\sigma^2}$ proves the result.
\end{proof}

\section{Simulation}\label{section_sim}
For simulation, we consider a discrete time system given by,
\begin{align}
x_{t+1} = \left[
\begin{array}{ccc}
0 & 1 & 0 \\
0 & 0 & 1 \\
4.6 & -3.5 & -1
\end{array}
\right]x_t- 
\left[
\begin{array}{c}
1 \\
0 \\
0
\end{array}
\right]\phi(y_t)
+\left[
\begin{array}{c}
1 \\
0 \\
0
\end{array}
\right]\tilde{u}_t
;\quad
y_t = \left[
\begin{array}{ccc}
1 & 0 & 0
\end{array}
\right]x_t.
\end{align}
The nonlinearity $\phi_I(y)$ is given by
\begin{align}
\phi(y) = \left\{
\begin{array}{cc}
0.1 y, & ||y|| < 1\\
13.6y - 13.5\text{sgn}(y), & 1 < ||y|| \leq 3 \\
4.6y + 13.5\text{sgn}(y), & 3 < ||y||
\end{array}
\right. .
\end{align}
This discrete time system demonstrates chaotic dynamics as shown in Figure (\ref{fig:dyn0}).
\begin{figure}[ht!]
\begin{center}
\includegraphics[width=4in]{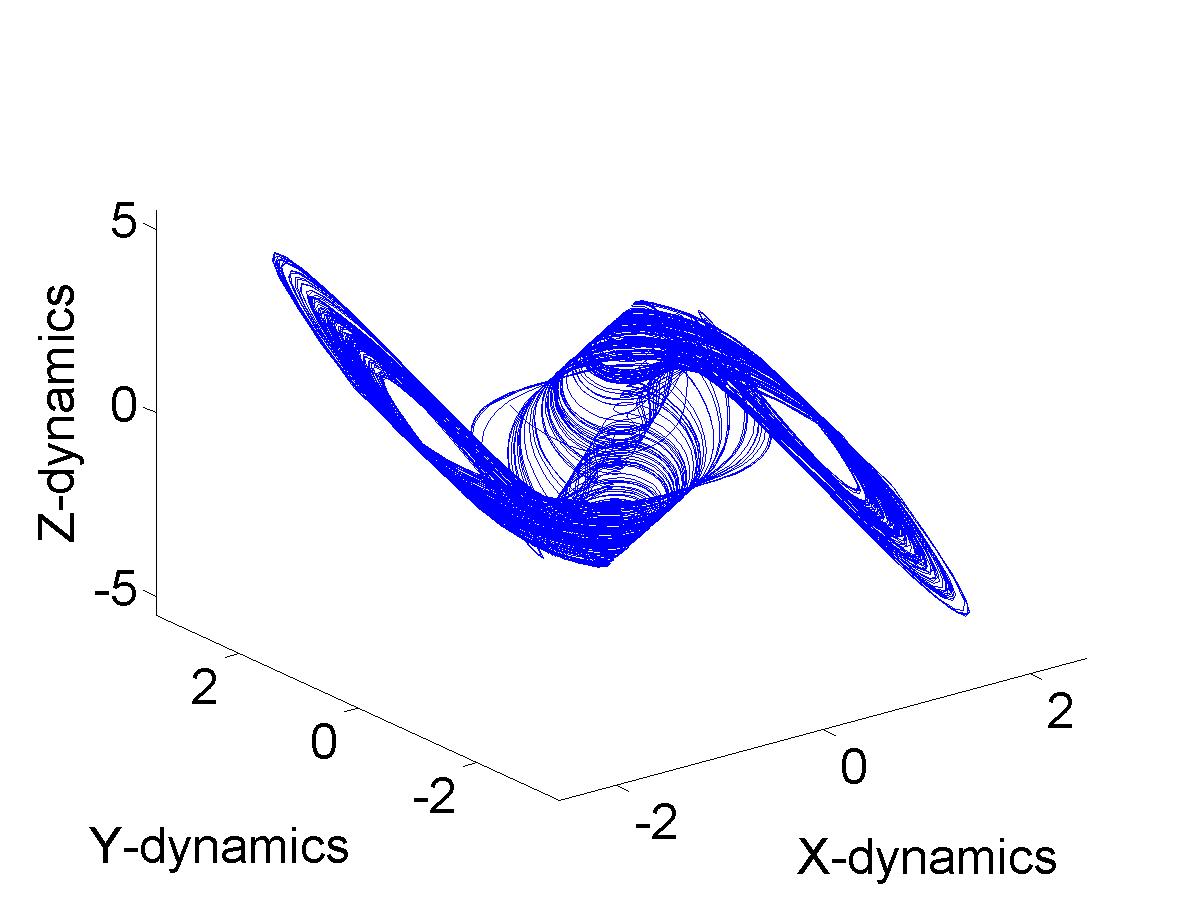}
\caption{State dynamics in 3 dimensions}
\label{fig:dyn0}
\end{center}
\end{figure}
We now implement our observer-based controller design on the above discrete time system. The uncertainty at the input and output are assumed to be Bernoulli random variable. From our sufficiency condition we get $p_c = 0.3441$ and $q_c = 0.3441$. We choose the probability of erasure for our simulation to be $p = 0.35$ and $q = 0.35$. In Figs. (\ref{fig:dyn}a) and (\ref{fig:dyn}b) we plot the observer-based controlled state and observer error dynamics. We see that they both decay to zero thereby verifying the sufficient condition of the main result. The state and observer error plots represent the outcome of average values of states and observer error over $50$ independent realizations of random variables $\xi$ and $\gamma$.

\begin{figure}[ht!]
\begin{center}
\mbox{
\hspace{-0.05in}
\subfigure[]{\scalebox{0.15}{\includegraphics{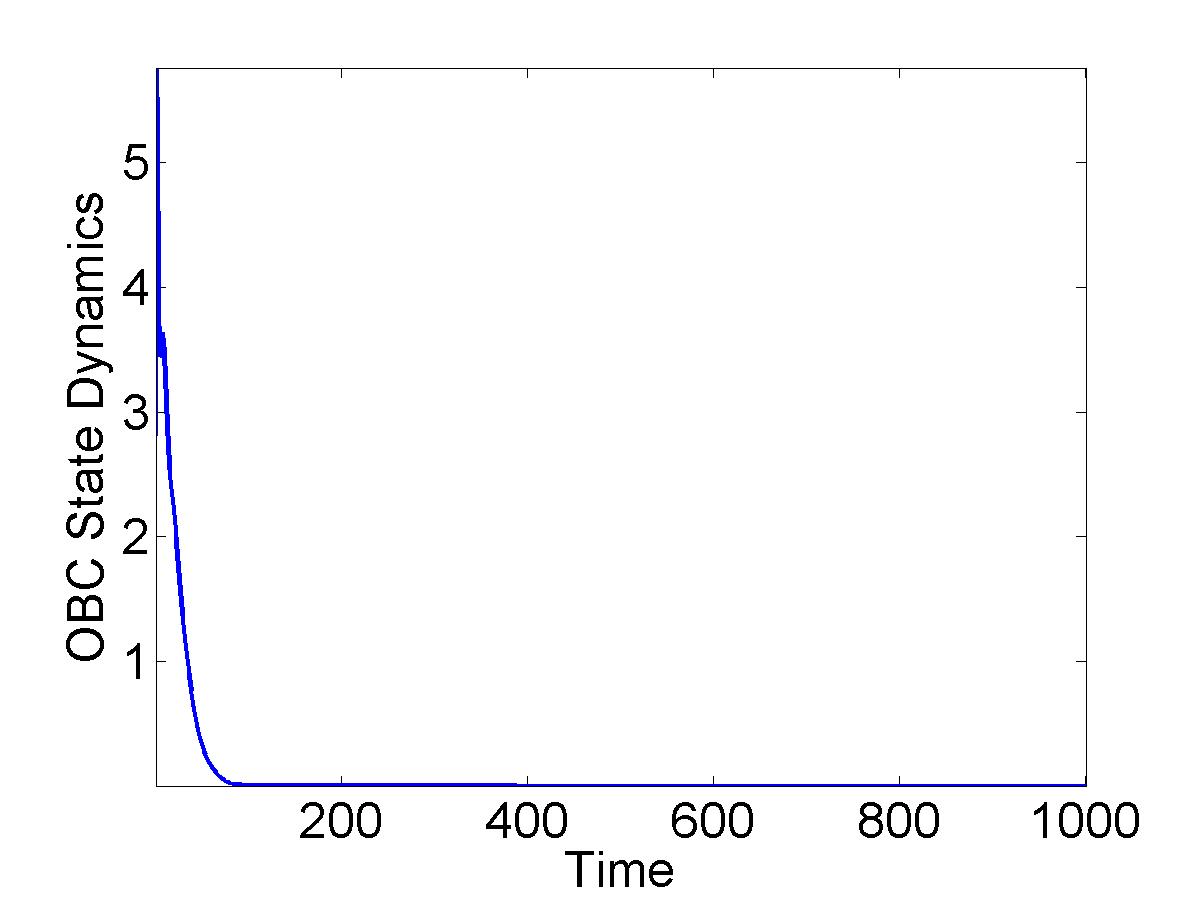}}}
\hspace{-0.05in}
\subfigure[]{\scalebox{0.15}{\includegraphics{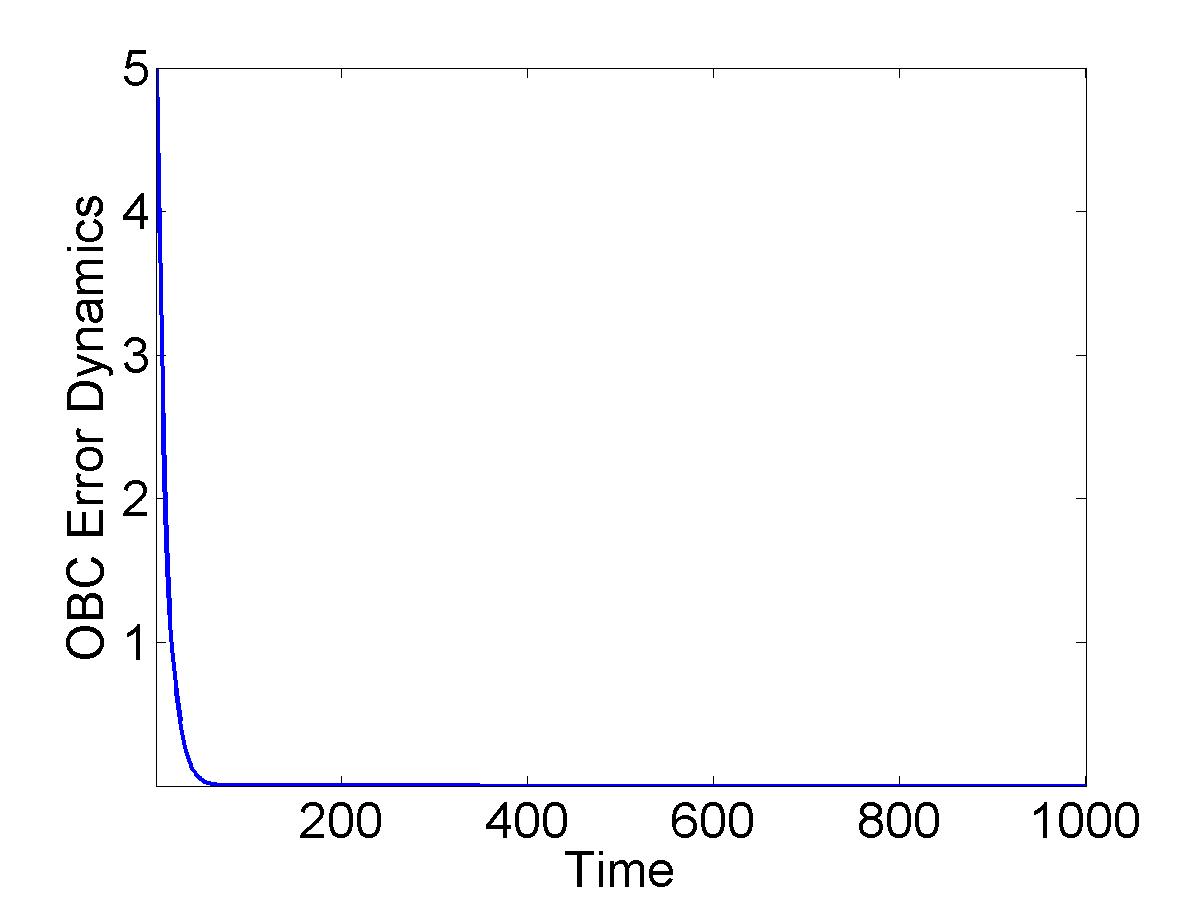}}} }
\caption{(a) Dynamics of controlled state, (b) Dynamics of observer error}
\label{fig:dyn}
\end{center}
\end{figure}
\begin{figure}[htb!]
\begin{center}
\mbox{
\hspace{-0.05in}
\subfigure[]{\scalebox{0.15}{\includegraphics{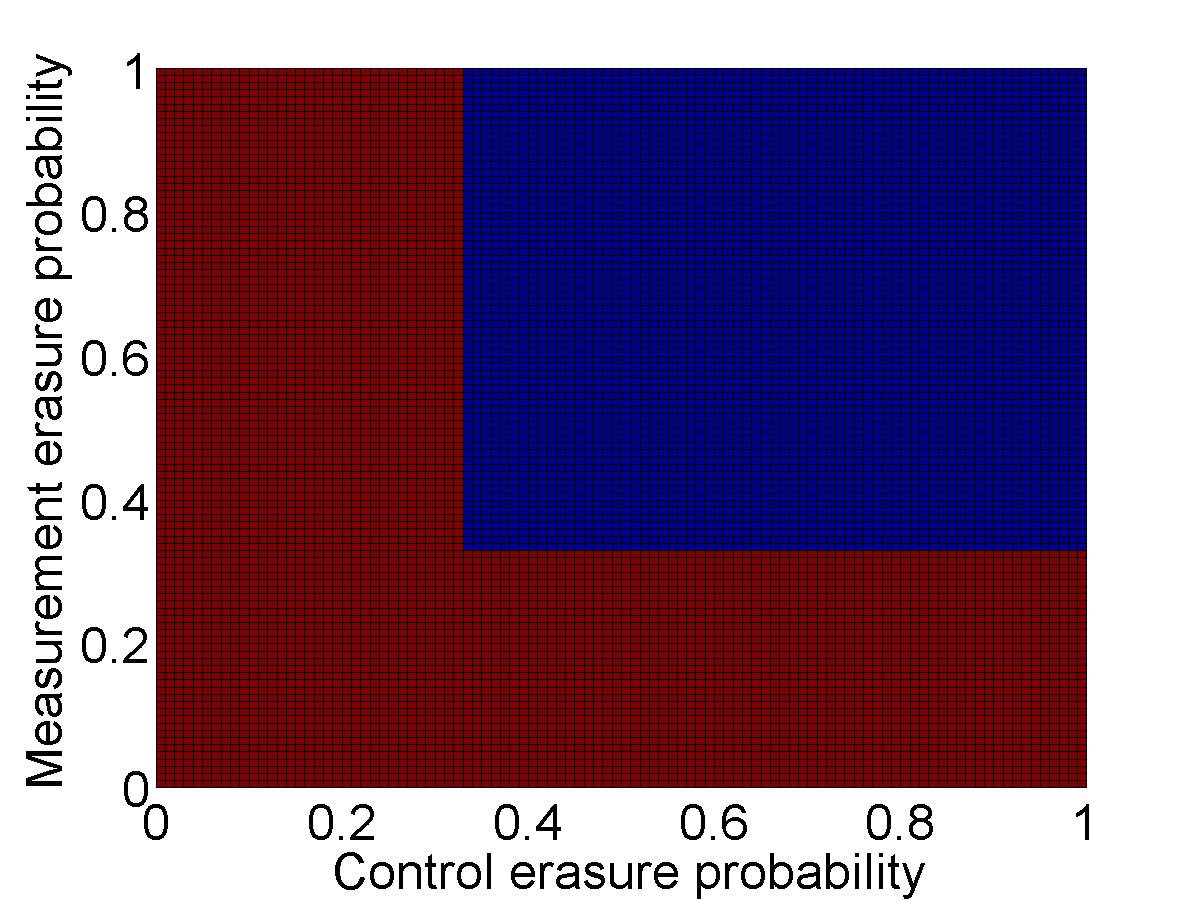}}}
\hspace{-0.05in}
\subfigure[]{\scalebox{0.15}{\includegraphics{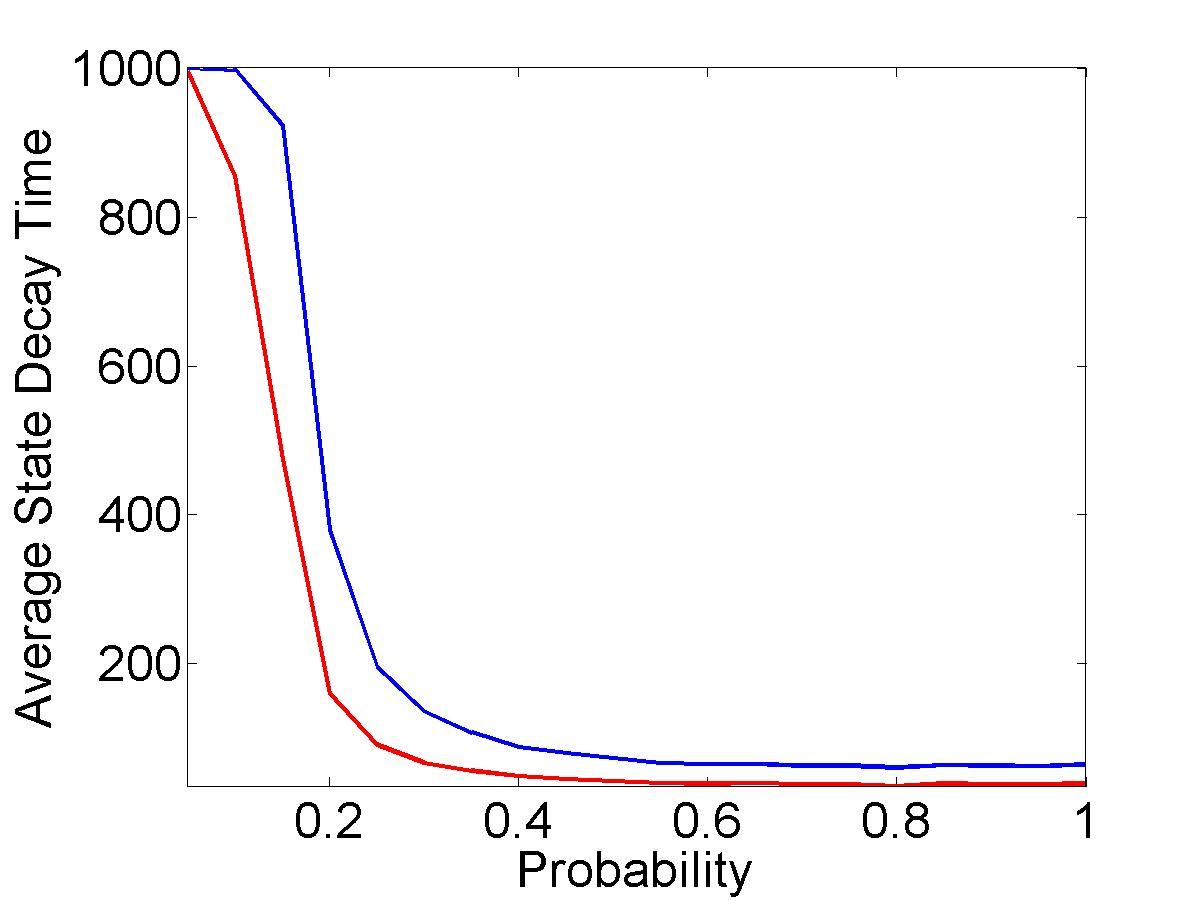}}} }
\caption{(a) Region of acceptable erasure in control and observation - given in blue, (b) Average decay time of controlled state (blue) and observer error (red) vs non-erasure probability}
\label{fig:sim1}
\end{center}
\end{figure}

\begin{figure}[htb!]
\begin{center}
\mbox{
\hspace{-0.05in}
\subfigure[]{\scalebox{0.3}{\includegraphics{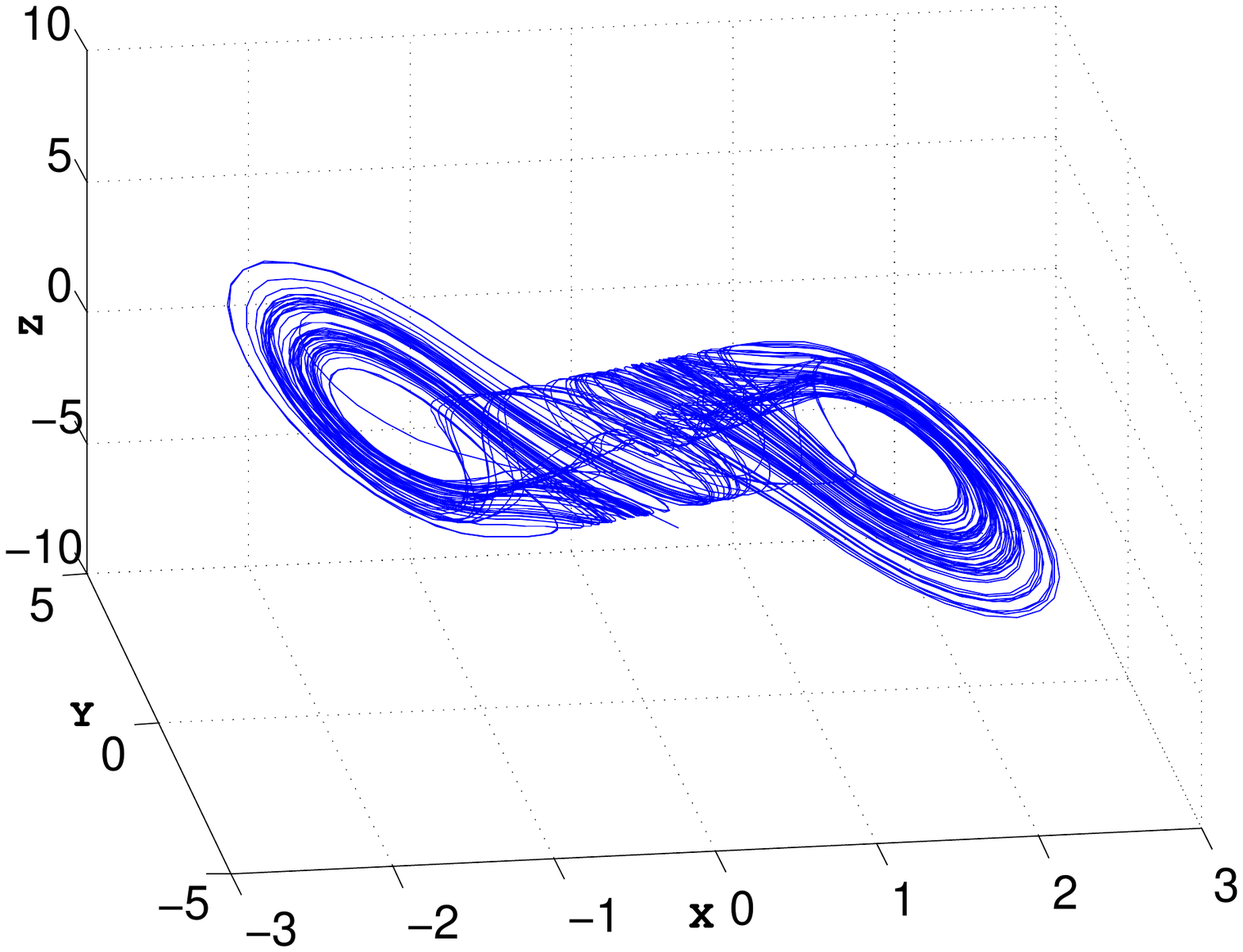}}}
\hspace{-0.05in}
\subfigure[]{\scalebox{0.3}{\includegraphics{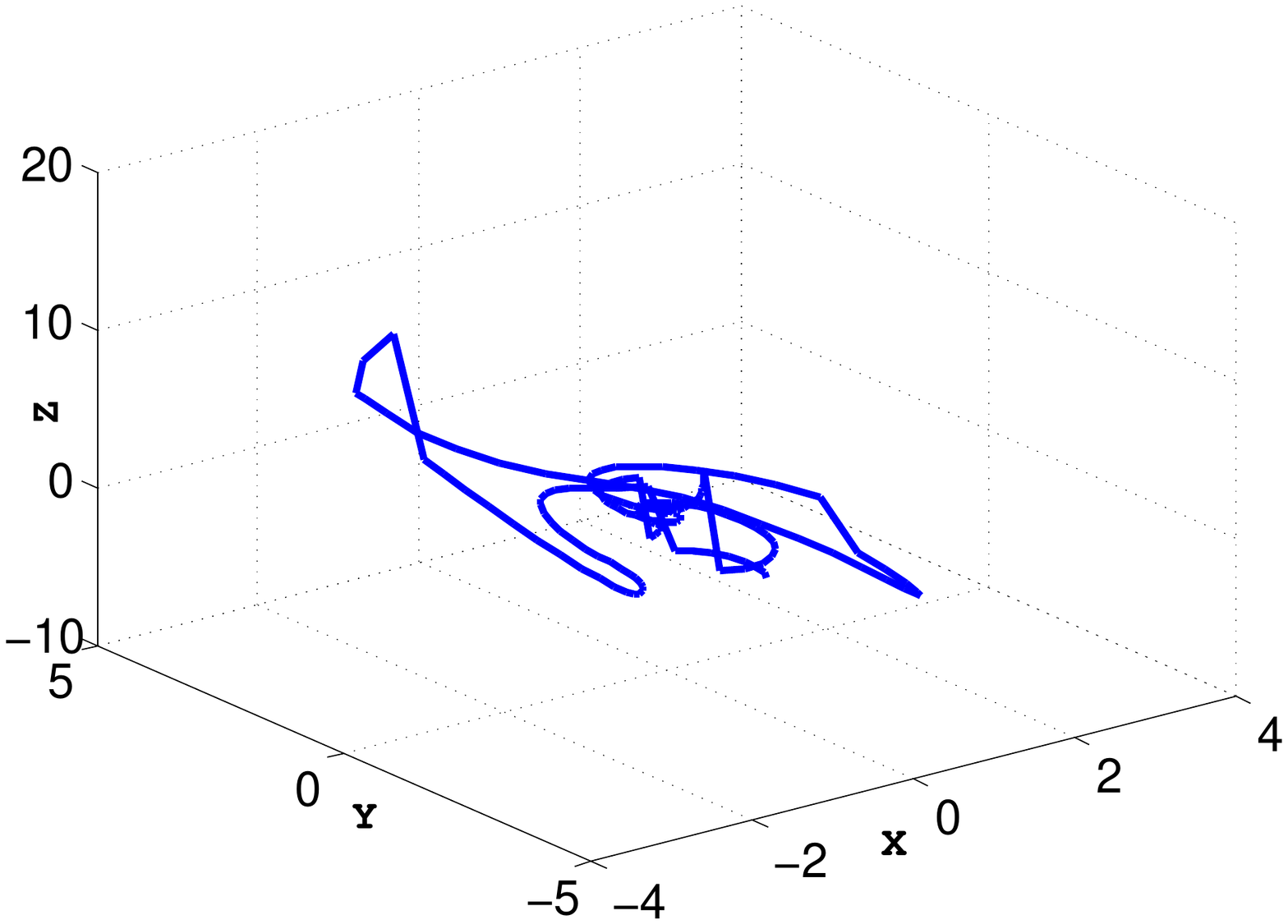}}} }
\caption{ Phase space dynamics for non-erasure probability of (a) $p=0.15$, (b) $p=0.35$}
\label{fig_phase_space}
\end{center}
\end{figure}
The system state and observer error decay to zero for almost all initial conditions and almost all sequences of the uncertainties $\gamma$ and $\xi$. In Fig. (\ref{fig:sim1}a) we plot in blue the region of control and measurement non-erasure probabilities that guarantee mean square stable controlled system and observer dynamics. To observe the effect of high erasure probability on the system we plot the average time required by the system to converge to zero over $100$ realizations of the uncertainty sequence in Fig. (\ref{fig:sim1}b). Shown in red (blue) is average time to decay for the observer error (controlled state) plotted against the non-erasure probability. We observe that the sharp drop in time of decay occurs below the critical non-erasure probability indicating that the system spends significant time away from the origin for probabilities less than the critical probability that guarantees mean square stability. Thus during the time the system is away from the origin, roughly speaking the trajectories move along the chaotic attractor of the uncontrolled system. We believe that this behavior is sensitive to the addition of small amount of additive noise to the system. As any small amount of additive noise will prevent the system from converging to the origin. In Fig. (\ref{fig_phase_space}), we show the phase space dynamics of the system with small amount of additive Gaussian noise with zero mean and $0.1$ variance for two different values of non-erasure probabilities and averaged over $20$ different realizations. Comparing Figs. (\ref{fig_phase_space}a) and (\ref{fig_phase_space}b), we see that the phase space dynamics  clearly reveals the attractor of the system for $p=0.15$ whereas the dynamics for $p=0.35$ is predominantly settled around the origin.

\section{Conclusion}\label{section_conclusion}
We studied the problem of observer-based controller design for nonlinear systems in Lure form over uncertain channels. We derived sufficient condition for the mean square stability of the closed loop system. The results provide for the synthesis method for the design of controller and observer robust to channel uncertainty. The main results of this paper are made possible using the stochastic variant of the Positive Real Lemma and the separation principle for stochastic nonlinear systems. The main result of the paper on mean square stability provide insight as to how the passivity property of the system can be traded off to account for uncertainty in the feedback loop.

\bibliographystyle{IEEETran}
\bibliography{aut_ref}           
\end{document}